\providecommand{\U}[1]{\protect\rule{.1in}{.1in}}
\newtheorem{theorem}{Theorem}
\newtheorem{corollary}[theorem]{Corollary}
\newtheorem{definition}[theorem]{Definition}
\newtheorem{lemma}[theorem]{Lemma}
\newtheorem{proposition}[theorem]{Proposition}
\newtheorem{remark}[theorem]{Remark}
\newenvironment{proof}[1][Proof]{\textbf{#1.} }{\ \rule{0.5em}{0.5em}}
\begin{document}
\[
\text{{\LARGE Reflexive\ Operator\ Algebras\ on\ Banach\ Spaces}}%
\]

\ \ \ \ \ \ \ \ \ \ \ \ \ \ \ \ \ \ \ \ \ \ \ \ \ \ \ \ \ \ \ \ \ \ \ \ \ \ \ \ \ \ \ \ 

\ \ \ \ \ \ by Florence Merlev\`{e}de$^{a}$, Costel
Peligrad\footnote{Supported in part by a Charles Phelps Taft Memorial Fund
grant.}$^{b}$ and Magda Peligrad\footnote{Supported in part by a Charles
Phelps Taft Memorial Fund grant, and the NSF grant DMS-1208237.}$^{c}$

\bigskip

\bigskip

$^{a}$ Florence Merlev\`{e}de: Universit\'{e} Paris Est, Laboratoire de
math\'{e}matiques, UMR 8050 CNRS, B\^{a}timent Copernic, 5 Boulevard
Descartes, 77435 Champs-Sur-Marne, France. E-mail: florence.merlevede@univ-mlv.fr

\bigskip

\bigskip

$^{b}$ Costel Peligrad: Department of Mathematical Sciences, University of
Cincinnati, PO Box 210025, Cincinnati, OH 45221-0025, USA. E-mail address: peligrc@ucmail.uc.edu

\bigskip

\bigskip

$^{c}$ Magda Peligrad: Department of Mathematical Sciences, University of
Cincinnati, PO Box 210025, Cincinnati, OH 45221-0025, USA. E-mail address: peligrm@ucmail.uc.edu

\bigskip\ \ \ \ \ \ \ \ \ \ \ \ \ \ \ \ \ \ \ \ \ \ \ \ \ \ \ \ \ \ \ \ \ \ \ \ \ \ \ \ \ \ \ \ \ \ \ \ \ \ \ \ \ \ \ \ \ \ \ \ \ \ \ \ \ \ \ \ \ \ \ \ 

\ \ \ \ \ \ \ \ \ \ \ \ \ \ \ \ \ \ \ \ \ \ \ \ \ \ \ \ \ \ \ \ \ \ \ \ \ \ \ \ \textbf{Abstract}%

\bigskip

In this paper we study the reflexivity of a unital strongly closed algebra of
operators with complemented invariant subspace lattice on a Banach space. We
prove that if such an algebra contains a complete Boolean algebra of
projections of finite uniform multiplicity and with the direct sum property,
then it is reflexive, i.e. it contains every operator that leaves invariant
every closed subspace in the invariant subspace lattice of the algebra. In
particular, such algebras coincide with their bicommutant.

\bigskip

Keywords: operator algebras, invariant subspace lattice, Boolean algebra of
projections, spectral operator.

\bigskip

Mathematics Subject Classification (2000): 47B48, 47A15, 47C05.\pagebreak

\section{Introduction}

\bigskip

\quad Let $A\subset B(X)$ denote a strongly closed algebra of operators on the
Banach space X. Suppose that $A$ has the property that each of its invariant
subspaces has an invariant complement. If $A$ contains a complete Boolean
algebra of projections of finite uniform multiplicity and with the direct sum
property as defined below, we prove that $A$ is reflexive in the sense that it
contains all the operators which leave its closed invariant subspaces
invariant [Theorem 15]. In particular such an algebra is equal to its
bicommutant $A"$ [Corollary 22]$.$ The problem of whether a strongly closed
algebra of operators with complemented invariant subspace lattice is reflexive
started to be studied in the sixties. This problem is a generalization of the
invariant subsbspace problem in operator theory. In [1], Arveson introduced a
technique for studying the particular case of transitive algebras on Hilbert
spaces, namely the strongly closed algebras of operators on Hilbert spaces
that have no non-trivial closed invariant subspaces. He proved that every
transitive algebra that contains a maximal abelian von Neumann algebra
coincides with the full algebra $B(X)$ if $X$ is a complex Hilbert space. In
[5], Douglas and Pearcy extended the result of Arveson to the case of
transitive operator algebras containing an abelian von Neumann algebra of
finite multiplicity. In [9], Hoover extended the result of Douglas and Pearcy
to the case of reductive operator algebras on Hilbert spaces that contain
abelian von Neumann algebras of finite multiplicity. Hoover proved that every
reductive operator algebra (that is a strongly closed subalgebra for which
every closed invariant subspace is reducing) which contains an abelian von
Neumann algebra of finite multiplicity is self-adjoint. The transitive algebra
result of Douglas and Pearcy was generalized in [10] to the case of transitive
algebras on Banach spaces that contain a $n-$fold direct sum of a cyclic
complete Boolean algebra of projections. The case of operator algebras on
Banach spaces with complemented invariant subspace lattice was considered by
Rosenthal and Sourour in [12]. They proved that every strongly closed algebra
of operators with complemented invariant subspace lattice containing a
complete Boolean algebra of projections of uniform multiplicity one is reflexive.

In this paper we build upon the techniques introduced by Arveson, [1], and
developed\ by Douglas and Pearcy, [5], and Radjavi and Rosenthal, [11], for
invariant subspaces of operator algebras as well as Bade's multiplicity theory
of Boolean algebras of projections, [2, 3]. We also use the results of Foguel,
[8] and Tzafriri, [14], about the commutant of Boolean algebras of projections
of finite multiplicity.

\bigskip

\section{Notations and preliminary results}

\bigskip

\bigskip

\subsection{Invariant subspaces of operator algebras}

\bigskip

\quad Let $X$ be a complex Banach space and $B(X)$ the algebra of all bounded
linear operators on $X.$ We will denote by $X^{(n)}$ the direct sum of $n$
copies of $X$ and, if $S\subset B(X),$ we denote%
\[
S^{(n)}=\left\{  a\oplus a\oplus...\oplus a\in B(X^{(n)})\text{ ; }a\in
S\right\}  .
\]

If $S\subset B(Y),$ where $Y$ is a Banach space, we denote by$\ $%
\textit{Lat}$S$ the collection of all closed linear subspaces of $Y$ that are
invariant under every element of $S$. If $L$ is a collection of closed linear
subspaces of $Y$, we denote by \textit{alg}$L$ the (strongly closed) algebra
of operators on $Y$ that leave every element of $L$ invariant. An algebra
$A\subset B(X)$ is called \textit{reflexive }if \textit{algLat}$A=A.$

In what follows all the subalgebras $A\subset B(X)$ will be assumed to be
strongly closed and containing the identity operator $I\in B(X)$.

\begin{remark}
Let $A\subset B(X)$ be a strongly closed algebra with $I\in A$ and $b\in
B(X).$ If Lat$A^{(n)}\subset$Lat$b^{(n)}$ for every $n\in%
\mathbb{N}
$, then $b\in A.$
\end{remark}

\begin{proof}
Indeed, then for every finite set of elements $\left\{  x_{1},x_{2}%
,...,x_{n}\right\}  \subset X$ we have that $K=\overline{\left\{  ax_{1}\oplus
ax_{2}\oplus...\oplus ax_{n}\text{ ; }a\in A\right\}  }\in$\textit{Lat}%
$A^{(n)}$ and therefore $K\in$\textit{Lat}$b^{(n)}$. This means that $b\in A$,
since $A$ is strongly closed.
\end{proof}

\begin{proposition}
Let $A\subset B(X)$ be a strongly closed algebra with complemented invariant
subspace lattice and with $I\in A.$ Let $q\in B(X)$ be a projection$.$ Then
\newline(i) If $q\in A$, the algebra $qAq\subset B(qX)$ has complemented
invariant subspace lattice and algLat$(qAq)=q($algLat$A)q.$\newline(ii) If
$q\in A^{\prime},$ where $A^{\prime}$ denotes the commutant of $A,$ the strong
operator closure $\overline{qAq}^{so}\subset B(qX)$ is an algebra with
complemented invariant subspace lattice.
\end{proposition}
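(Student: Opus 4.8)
My plan is to treat (i) and (ii) separately; in each case I first identify $\mathrm{Lat}(qAq)$, viewed as a lattice of closed subspaces of $qX$, in terms of $\mathrm{Lat}\,A$, and then transport the complementation property across this identification. (I read $q(\mathrm{algLat}\,A)q$ as $\{(qcq)|_{qX}:c\in\mathrm{algLat}\,A\}\subseteq B(qX)$.) For (i): since $q\in A$, every $N\in\mathrm{Lat}\,A$ is $q$-invariant, so $qN=N\cap qX$ is \emph{closed}, and for $m\in N\cap qX$ one has $qaqm=q(am)\in N\cap qX$, i.e. $N\cap qX\in\mathrm{Lat}(qAq)$. Conversely, for $M\in\mathrm{Lat}(qAq)$ I would let $N$ be the closed linear span of $AM:=\{am:a\in A,\ m\in M\}$, the smallest member of $\mathrm{Lat}\,A$ containing $M$; then $q\,\mathrm{span}(AM)=\mathrm{span}(qAqM)\subseteq M$ (using $m=qm$) gives $qN\subseteq M$, while $M\subseteq qN$ because $M\subseteq N$ and $M=qM$, so $qN=M$. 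Hence $\mathrm{Lat}(qAq)=\{N\cap qX:N\in\mathrm{Lat}\,A\}$. To complement a given $M\in\mathrm{Lat}(qAq)$, write $M=qN$ with $N\in\mathrm{Lat}\,A$, use complementedness of $\mathrm{Lat}\,A$ to pick $N'\in\mathrm{Lat}\,A$ with $X=N\oplus N'$, and note that $qN'\in\mathrm{Lat}(qAq)$ with $qX=M\oplus qN'$ — directness from $M\cap qN'\subseteq N\cap N'=\{0\}$, and $qX=qN+qN'$ by applying $q$ to a decomposition in $X$.

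For the identity $\mathrm{algLat}(qAq)=q(\mathrm{algLat}\,A)q$ (still in (i)): given $b\in\mathrm{algLat}(qAq)$, I would extend it to $\tilde b:=bq\in B(X)$, which satisfies $\tilde b=q\tilde bq$; for $N\in\mathrm{Lat}\,A$ we get $\tilde bN=b(qN)\subseteq qN\subseteq N$ because $qN\in\mathrm{Lat}(qAq)$, so $\tilde b\in\mathrm{algLat}\,A$ and $b=(q\tilde bq)|_{qX}\in q(\mathrm{algLat}\,A)q$. Conversely, for $c\in\mathrm{algLat}\,A$ and $M\in\mathrm{Lat}(qAq)$, write $M=qN$ with $N\in\mathrm{Lat}\,A$ and $M\subseteq N$; then $qcqm=qcm$ and $cm\in N$ for $m\in M$, so $qcqm\in qN=M$, giving $(qcq)|_{qX}\in\mathrm{algLat}(qAq)$.

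For (ii): since $q\in A'$ is a projection, $qX$ and $(1-q)X$ are both reducing for $A$, so $qX\in\mathrm{Lat}\,A$ and $qaqm=qam=am$ for $m\in qX$; hence $\mathrm{Lat}(qAq)=\{M\in\mathrm{Lat}\,A:M\subseteq qX\}$. Passing to the strong operator closure does not enlarge the invariant subspace lattice (a closed subspace invariant under a strongly dense set of operators is invariant under every operator in the closure), so $\mathrm{Lat}\,\overline{qAq}^{so}=\mathrm{Lat}(qAq)$, and it remains to complement a given $M\in\mathrm{Lat}\,A$ with $M\subseteq qX$ inside $qX$. Picking $N\in\mathrm{Lat}\,A$ with $X=M\oplus N$, the subspace $N\cap qX$ is closed, lies in $\mathrm{Lat}\,A$ (intersection of the $A$-invariant subspaces $N$ and $qX$), is contained in $qX$, and gives $qX=M\oplus(N\cap qX)$: directness is clear, and if $y\in qX$ is written $y=m+n$ with $m\in M$, $n\in N$, then $n=y-m\in qX$. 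Moreover $\overline{qAq}^{so}$ is a unital strongly closed subalgebra of $B(qX)$: it contains $q$, the identity of $B(qX)$, and the strong closure of an algebra is an algebra.

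I do not anticipate a single serious obstacle — the argument is elementary bookkeeping — but two points deserve care. In (i), the hypothesis $q\in A$ is exactly what makes $qN$ closed, namely $qN=N\cap qX$, so that $N\mapsto N\cap qX$ is a genuine identification of $\mathrm{Lat}(qAq)$; without it one only gets $qN\subseteq N\cap qX$. In (ii), $q\in A'$ makes $qX$ reducing for $A$ but gives no information about whether $q$ leaves arbitrary members of $\mathrm{Lat}\,A$ invariant, which is why the complement of $M$ cannot be taken to be $qN$ and must instead be the intersection $N\cap qX$ of the $A$-complement $N$ with the reducing subspace $qX$.
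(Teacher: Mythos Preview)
Your proof is correct and follows essentially the same route as the paper: in (i) you build, for $M\in\mathrm{Lat}(qAq)$, the smallest $N\in\mathrm{Lat}\,A$ containing $M$ (namely $\overline{\mathrm{span}\,AM}=\overline{AqM}$), show $qN=M$, and then compress an $A$-complement of $N$ by $q$; for the $\mathrm{algLat}$ identity you extend $b\in\mathrm{algLat}(qAq)$ to $bq$ on $X$ exactly as the paper extends $c$ by zero on $(I-q)X$; and in (ii) you intersect an $A$-complement with $qX$. Your write-up is in places more explicit than the paper's (you spell out $qN=N\cap qX$, verify the direct-sum decomposition in (ii), and note that passing to the strong closure does not change $\mathrm{Lat}$), but the underlying argument is the same.
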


\begin{proof}
We prove first (i). Clearly, $qAq$ is a weakly closed subalgebra of $B(qX)$
whose unit is $q.$ Let $L\subset qX,$ $L\in$\textit{Lat}$\mathit{(}qAq).$ We
denote $\widetilde{L}=\overline{AqL}$, the closure being taken in $X.$ Then,
obviously, $\widetilde{L}\in$\textit{Lat}$A$ and, therefore $\widetilde{L}$
has a complement $\widetilde{L}^{c}$ in \textit{Lat}$A.$ Since $q\in A$, we
have $q\widetilde{L}\subset\widetilde{L}$, also $q\widetilde{L}^{c}%
\subset\widetilde{L}^{c}$ and $q\widetilde{L}^{c}\in$\textit{Lat}$(qAq)$.
Moreover, it is immediate that $q\widetilde{L}$ and $q\widetilde{L}^{c}$ are
closed linear subspaces of $qX$ such that%
\[
q\widetilde{L}\oplus q\widetilde{L}^{c}=qX.
\]
On the other hand we have $L\subset q\widetilde{L}=q\overline{AqL}%
\subset\overline{qAqL}\subset\overline{L}=L$. Hence\newline%
\[
q\widetilde{L}=L.
\]
It follows that $L$ is complemented in \textit{Lat}$(qAq)$ and so $qAq$ has
complemented invariant subspace lattice. Let now $b\in$ \textit{algLat}$A$ and
$L\in$\textit{Lat}$(qAq).$ By the above argument, there exists $\widetilde
{L}\in$\textit{Lat}$A$ such that $L=q\widetilde{L}.$ Hence $b\widetilde
{L}\subset\widetilde{L}$. Therefore, since $q\widetilde{L}=L\subset
\widetilde{L}$ it follows that $qbqL\subset L$, so $qbq\in$ \textit{algLat}%
$(qAq).$ Conversely, let $c\in$ \textit{algLat}$(qAq)$ and let $\widetilde
{c}\in B(X)$ be the extension of $c$ to $X$ that equals $0$ on $(I-q)X.$ Then,
it is straightforward to show that $\widetilde{c}\in$ \textit{algLat}$A$ and
$c=q\widetilde{c}q$ and so the proof is completed.\newline To establish (ii),
let $K\in$\textit{Lat}$(qAq)$. Since $q\in A^{\prime}$, it follows that $K\in
$\textit{Lat}$A$ and therefore $K$ has a complement $K^{c}\in$\textit{Lat}$A$.
Then, clearly $K^{c}\cap qX\in$\textit{Lat}$(qAq)$ and $K+K^{c}\cap qX=qX$.
\end{proof}

\bigskip

We will also need the following:

\begin{lemma}
Let $A\subset B(X)$ be an algebra with complemented invariant subspace lattice
and let $K\in$Lat$A$. If $p\in A^{\prime}$ is the projection on $K$ and
$t_{1},t_{2},...t_{n}\in(pAp)^{\prime}$, for some $n\in%
\mathbb{N}
,$ then the subspace
\[
\Gamma_{\left\{  t_{1},t_{2},...,t_{n};p\right\}  }=\left\{  x\oplus
t_{1}x\oplus t_{2}x\oplus...\oplus t_{n}x\text{ ; }x\in pX\right\}
\in\text{Lat}A^{(n+1)}%
\]
is complemented in Lat$A^{(n+1)}.$
\end{lemma}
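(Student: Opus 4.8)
The plan is to produce an explicit bounded idempotent $E$ on $X^{(n+1)}$ that commutes with $A^{(n+1)}$ and has range $\Gamma_{\{t_{1},\dots ,t_{n};p\}}$; its kernel is then automatically a complement lying in $\mathrm{Lat}\,A^{(n+1)}$. The preliminary observation that powers everything is a mixed commutation relation. Fix $a\in A$. Since $p\in A^{\prime}$ we have $ap=pa$, so $a(pX)\subset pX$ and, for $x\in pX$, $ax=pax=(pap)x$; hence the restriction of $a$ to $pX$ coincides with $pap\in pAp$. Because each $t_{i}\in(pAp)^{\prime}$ acts on $pX$ and commutes there with $pap$, we get, for every $x\in pX$,
\[
a\,t_{i}x=(pap)\,t_{i}x=t_{i}\,(pap)x=t_{i}\,(ax),\qquad i=1,\dots ,n .
\]
In particular $a^{(n+1)}$ sends $x\oplus t_{1}x\oplus\cdots\oplus t_{n}x$ to $(ax)\oplus t_{1}(ax)\oplus\cdots\oplus t_{n}(ax)$ with $ax\in pX$, which re-establishes $\Gamma:=\Gamma_{\{t_{1},\dots ,t_{n};p\}}\in\mathrm{Lat}\,A^{(n+1)}$ (closedness of $\Gamma$ is clear, since $x\mapsto x\oplus t_{1}x\oplus\cdots\oplus t_{n}x$ is bounded below on $pX$).

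Next I would introduce $E\in B(X^{(n+1)})$ defined by
\[
E(x_{0}\oplus x_{1}\oplus\cdots\oplus x_{n})=(px_{0})\oplus t_{1}(px_{0})\oplus\cdots\oplus t_{n}(px_{0}).
\]
It is routine to check that $E$ is bounded, that $E^{2}=E$ (using $p^{2}=p$), that $\mathrm{Range}\,E=\Gamma$ (the inclusion $\subset$ is obvious, and applying $E$ to $x\oplus 0\oplus\cdots\oplus 0$ with $x\in pX$ gives the reverse), and that $\ker E=(I-p)X\oplus X^{(n)}=:M$, since $E$ annihilates a vector precisely when its zeroth coordinate lies in $(I-p)X$. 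The crucial point is that $E$ commutes with $A^{(n+1)}$: for $a\in A$ and $\vec{x}=x_{0}\oplus\cdots\oplus x_{n}$, the zeroth coordinate of both $Ea^{(n+1)}\vec{x}$ and $a^{(n+1)}E\vec{x}$ equals $p(ax_{0})=a(px_{0})$, while their $i$-th coordinates are $t_{i}\,p(ax_{0})=t_{i}\bigl(a(px_{0})\bigr)$ and $a\,t_{i}(px_{0})$ respectively, which agree by the displayed mixed commutation relation applied to $x=px_{0}\in pX$. Hence $E\in(A^{(n+1)})^{\prime}$.

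Since an idempotent in the commutant of an algebra has both invariant range and invariant kernel, it follows that $\Gamma=\mathrm{Range}\,E$ and $M=\ker E$ both belong to $\mathrm{Lat}\,A^{(n+1)}$ and that $X^{(n+1)}=\Gamma\oplus M$; thus $M=(I-p)X\oplus X^{(n)}$ is the desired complement of $\Gamma$ in $\mathrm{Lat}\,A^{(n+1)}$. I do not anticipate any serious obstacle: the only mildly delicate step is the verification that $E$ commutes with $A^{(n+1)}$, and this rests entirely on combining $p\in A^{\prime}$ with $t_{i}\in(pAp)^{\prime}$ to obtain $at_{i}=t_{i}a$ on $pX$. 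I note in passing that the hypothesis that $A$ has complemented invariant subspace lattice is not actually used in this lemma (it is merely carried along from the standing setup), and that one could bypass $E$ entirely by taking $M=(I-p)X\oplus X^{(n)}$ outright and checking directly that $M\in\mathrm{Lat}\,A^{(n+1)}$ (immediate from $(I-p)X\in\mathrm{Lat}\,A$) and that $\Gamma\oplus M=X^{(n+1)}$ (given $u\oplus v_{1}\oplus\cdots\oplus v_{n}$, take its $\Gamma$-part to be $(pu)\oplus t_{1}(pu)\oplus\cdots\oplus t_{n}(pu)$ and its $M$-part to be $((I-p)u)\oplus(v_{1}-t_{1}pu)\oplus\cdots\oplus(v_{n}-t_{n}pu)$).
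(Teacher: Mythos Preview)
Your proof is correct and lands on the same complement as the paper: the authors simply observe that $(1-p)X\in\mathrm{Lat}\,A$ and declare $(1-p)X\oplus X^{(n)}$ to be the required complement of $\Gamma$, which is precisely the direct verification you sketch in your final paragraph. Your construction of the idempotent $E\in (A^{(n+1)})'$ is a more detailed (and perfectly valid) packaging of the same idea, and your remark that the complemented-lattice hypothesis is not actually used here---because $p\in A'$ already forces $(I-p)X\in\mathrm{Lat}\,A$---is a correct sharpening of what the paper states.
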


\begin{proof}
Since $A$ has complemented invariant subspace lattice and $pX=K\in
$\textit{Lat}$A,$ it follows that the subspace $(1-p)X=(pX)^{c}=K^{c}%
$\ belongs to \textit{Lat}$A.$ It is then clear that $(pX)^{c}\oplus X^{(n)}$
is a complement of $\Gamma_{\left\{  t_{1},t_{2},...,t_{n};p\right\}  }$ in
\textit{Lat}$A^{(n+1)}.$
\end{proof}

\begin{remark}
Let $I\in A\subset B(X)$ be a strongly closed subalgebra with complemented
invariant subspace lattice. If $A$ is reflexive, then $A"=A$ where $A"$
denotes the bicommutant of $A.$
\end{remark}

\begin{proof}
If $a\in A"$, then, in particular, $a$ commutes with every projection on an
invariant subspace of $A.$ Therefore $a\in$ \textit{algLat}$A=A.$
\end{proof}

\bigskip The following concept is defined for instance in [11, \S \ 8.2].

\begin{definition}
Let $A\subset B(X)$ be a subalgebra. A linear operator $T$ defined on a not
necessarily closed linear subspace $P\subset X$ is called a graph
transformation for $A$ if there exist finitely many linear operators
$T_{1},T_{2},...,T_{l},$ all defined on $P,$ such that
\[
\left\{  x\oplus Tx\oplus T_{1}x\oplus T_{2}x\oplus...\oplus T_{l}x\text{ ;
}x\in P\right\}  \in\text{Lat}A^{(l+2)}.
\]

\end{definition}

\begin{remark}
Let $K\in$Lat$A^{(n)},n\in%
\mathbb{N}
.$ Denote by
\[
K_{0}=\left\{  \mathbf{x}\in X^{(n-1)}\text{ ; }0\oplus\mathbf{x}\in
K\right\}  \in\text{Lat}A^{(n-1)}.
\]
Then, if $K_{0}$ is complemented in Lat$A^{(n-1)}$ with complement $K_{0}^{c}$
it follows that there exist graph transformations for $A:$ $T_{1}%
,T_{2},...,T_{n-1},$ defined on a linear subspace, $P\subset X,$ such that%
\[
(X\oplus K_{0}^{c})\cap K=\left\{  x\oplus T_{1}x\oplus T_{2}x\oplus...\oplus
T_{n-1}x\text{ ; }x\in P\right\}  .
\]

\end{remark}

\begin{proof}
Straightforward.
\end{proof}

\subsection{Boolean algebras of projections in Banach spaces and spectral
operators}

\bigskip

\quad Let $\mathcal{B}$ be complete Boolean algebra of projections in a
(complex) Banach space $X$ (as defined for instance in [2] or [6, Chapter
XVII]). It is known, [13], that there exists an extremally disconnected
compact Hausdorff topological space $\Omega$ (that is a totally disconnected
compact Hausdorff space in which the closure of every open set is also open),
such that $\mathcal{B}$ is equivalent as a Boolean algebra with the Boolean
algebra of open and closed subsets of $\Omega$. We will denote by $\Sigma$ the
collection of Borel sets of $\Omega.$ Such a compact Hausdorff space is called
a Stonean space.

\bigskip

The following remark collects some results about the complete Boolean algebras
of projections in Banach spaces that will be used in this paper.

\begin{remark}
(i) If $\mathcal{B}$ is a complete Boolean algebra of projections, then there
is a regular countably additive spectral measure $E$ in $X$ defined on the
family of Borel sets in $\Omega$ such that the mapping%
\[
S(f)=\int\limits_{\Omega}f(w)E(dw)
\]
is a continuous isomorphism of the algebra $C(\Omega)$ of continuous functions
on $\Omega$ \ onto the the uniformly closed algebra of operators, $B,$
generated by $\mathcal{B}$.\newline(ii) The algebra $B$ coincides with the
weakly closed algebra generated by $\mathcal{B}$ and consists of spectral
operators of scalar type.\newline(iii) The range of $E$ is precisely the
Boolean algebra $\mathcal{B}.$\newline(iv) $\mathcal{B}$ is norm bounded.
\end{remark}

\begin{proof}
(i) and (iii) follow from [ 6, Lemma XVII 3.9.], (ii) is [ 6, Corollary XVII
3.17.] and (iv) follows from [ [2], Theorem 2.2.].
\end{proof}

Let $\mathcal{B}\subset B(X)$ be a complete Boolean algebra of projections. We
say that $\mathcal{B}$ has uniform multiplicity $k,$ $k\in%
\mathbb{N}
$ if there are $x_{1},x_{2},...,x_{k}\in X$ such that $\overline{lin}\left\{
ex_{i}\text{ ; }e\in\mathcal{B},1\leq i\leq k\right\}  =X$ and no subset of
$X$ of cardinality less than $k$ has this property [3, Definition 3.2]. For
each $i,1\leq i\leq k,$ denote $\mathfrak{M}(x_{i})=\overline{lin}\left\{
ex_{i}\text{ ; }e\in\mathcal{B}\right\}  .$ Here, $\overline{lin}\left\{
ex_{i}\text{ ; }e\in\mathcal{B}\right\}  $ denotes the closed linear subspace
of $X$ spanned by $\left\{  ex_{i}\text{ ; }e\in\mathcal{B}\right\}  .$

\bigskip

The next remark collects some known results from [3], (see also [6]):

\bigskip

\begin{remark}
Let $\mathcal{B}$ be a complete Boolean algebra of finite uniform multiplicity
$n,n\in%
\mathbb{N}
$ and let $\left\{  x_{1},\cdots,x_{n}\right\}  $ be a set of vectors such
that
\[
\overline{lin}\left\{  ex_{i}\text{ ; }e\in\mathcal{B},1\leq i\leq n\right\}
=X
\]
Then \newline(i) There are $x_{i}^{\ast}\in X^{\ast},i=1,2,...,n,$ where
$X^{\ast}$ is the dual Banach space of $X,$ such that each of the measures
$\mu_{i}(\delta)=x_{i}^{\ast}E(\delta)x_{i},i\in\left\{  1,2,...,n\right\}  ,$
$\delta\in\Sigma$ vanishes on sets of first category of $\Omega$ and $\mu
_{i}(\sigma)\neq0$ if $\sigma$ has nonempty interior. The measures $\mu_{i}$
are equivalent and $x_{i}^{\ast}(\mathfrak{M}(x_{j}))=\left\{  0\right\}  $
for $i\neq j.$\ \newline(ii) There exists a continuous injective linear map
$V$ of $X$ onto a dense linear subspace $L\subset\sum\limits_{i=1}^{n}%
L^{1}(\Omega,\Sigma,\mu_{i})$ such that if $V(x)=\mathbf{f}=\sum f_{i}$,
then\newline(a) $x_{i}^{\ast}E(\delta)x=\int_{\delta}f_{i}(\omega)\mu
_{i}(d\omega),$ $\delta\in\Sigma.$ In particular $V(x_{i})=0\oplus0\oplus
\chi_{\Omega}\oplus...\oplus0,$ where $\chi_{\Omega}=1$ is on the $i$-th place
in the direct sum.\newline(b) $x=\lim_{m\rightarrow\infty}\sum_{i=1}%
^{n}S(f_{i}\chi_{\delta_{m}})x_{i}$ where $\chi_{\delta_{m}}$ is the
characteristic function of $\delta_{m}=\left\{  \omega\text{ ; }\left\vert
f(\omega)\right\vert \leq m\right\}  .$\newline(iii) The linear space $L$
endowed with the norm $\left\Vert \mathbf{f}\right\Vert _{0}=\max
\limits_{1\leq i\leq n}\left\Vert f_{i}\right\Vert _{1}+\left\Vert
V^{-1}(\mathbf{f})\right\Vert $, is a Banach space and $V$ is a Banach space
isomorphism between $X$ and $(L,\left\Vert \cdot\right\Vert _{0}).$
\end{remark}

\begin{proof}
The points (i) and (ii) follow from [3, Lemma 5.1. and Theorem 5.2.], (see
also [6, Theorem XVIII 3.19.] ). The proof of (iii) is immediate.
\end{proof}

A function $f$ is called $E-$essentially bounded if%
\[
\inf_{E(\delta)=1}\sup_{\omega\in\delta}\left\vert f(\omega)\right\vert
\]
is finite [6, Definition 7].

Denote by $EB(\Omega,\Sigma)$ the set of all $E-$essentially bounded $\Sigma
-$measurable functions.

\begin{lemma}
With the notations in Remark 8, if $\varphi\in EB(\Omega,\Sigma)$, then the
operator $M_{\varphi}(\mathbf{f})=\varphi\mathbf{f}$\textbf{ }is a well
defined, bounded operator on $(L,\left\Vert \cdot\right\Vert _{0})$ and
$M_{\varphi}=VS(\varphi)V^{-1}.$ Here $\varphi\mathbf{f=}\varphi f_{1}%
\oplus\varphi f_{2}\oplus...\oplus\varphi f_{n}$. Thus
\[
VBV^{-1}=\left\{  M_{\varphi}\text{ ; }\varphi\in EB(\Omega,\Sigma)\right\}
.
\]

\end{lemma}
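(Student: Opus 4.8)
The plan is to reduce everything to the single operator identity $M_{\varphi}=VS(\varphi)V^{-1}$; once that is in place, well-definedness, boundedness and the description of $VBV^{-1}$ all follow at once. Throughout I will use the $E$-essentially bounded functional calculus $\psi\mapsto S(\psi)$, $\psi\in EB(\Omega,\Sigma)$, extending $S|_{C(\Omega)}$ from Remark 7: it is an algebra homomorphism into $B(X)$, continuous for the $E$-essential sup norm, with $S(\chi_{\delta})=E(\delta)$ and $x^{\ast}S(\psi)y=\int_{\Omega}\psi\,d\big(x^{\ast}E(\cdot)y\big)$ for $x^{\ast}\in X^{\ast}$, $y\in X$; all of this is standard and available from [6, Chapter XVII]. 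One preliminary observation is needed: since $\mu_{i}(\delta)=x_{i}^{\ast}E(\delta)x_{i}$, every $E$-null set is $\mu_{i}$-null, so an $E$-essentially bounded $\varphi$ is $\mu_{i}$-essentially bounded for each $i$, and hence $\varphi f_{i}\in L^{1}(\Omega,\Sigma,\mu_{i})$ whenever $f_{i}\in L^{1}(\Omega,\Sigma,\mu_{i})$.

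To prove the identity I would fix $x\in X$, write $V(x)=\mathbf{f}=\sum_{i=1}^{n}f_{i}$ and $V(S(\varphi)x)=\mathbf{g}=\sum_{i=1}^{n}g_{i}$ with $f_{i},g_{i}\in L^{1}(\mu_{i})$, and compare the two sides coordinatewise against the functionals $x_{i}^{\ast}$. For $\delta\in\Sigma$, multiplicativity of $S$ gives $E(\delta)S(\varphi)=S(\chi_{\delta})S(\varphi)=S(\chi_{\delta}\varphi)$, so combining the functional-calculus integral formula with Remark 8(ii)(a) yields
\[
\int_{\delta}g_{i}\,d\mu_{i}=x_{i}^{\ast}E(\delta)S(\varphi)x=x_{i}^{\ast}S(\chi_{\delta}\varphi)x=\int_{\delta}\varphi f_{i}\,d\mu_{i}.
\]
Since this holds for every $\delta\in\Sigma$, we get $g_{i}=\varphi f_{i}$ $\mu_{i}$-a.e.\ for each $i$, i.e.\ $V(S(\varphi)x)=\varphi\,V(x)$. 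Two consequences are immediate: $\varphi\mathbf{f}=\sum_{i}\varphi f_{i}$ equals $V(S(\varphi)x)$ and therefore lies in $L$, so $M_{\varphi}$ is a well-defined linear self-map of $L$; and the identity reads $M_{\varphi}=V\,S(\varphi)\,V^{-1}$ as maps $L\to L$, so by Remark 8(iii) (which makes $V$ a Banach-space isomorphism of $X$ onto $(L,\|\cdot\|_{0})$) $M_{\varphi}$ is bounded on $(L,\|\cdot\|_{0})$, with $\|M_{\varphi}\|\le\|V\|\,\|V^{-1}\|\,\|S(\varphi)\|$.

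For the last assertion I would invoke that the algebra $B$ of Remark 7 — the uniformly, equivalently weakly, closed algebra generated by $\mathcal{B}$ — coincides with $\{S(\varphi):\varphi\in EB(\Omega,\Sigma)\}$ ([6, Chapter XVII], together with Remark 7(ii)). Conjugating by $V$ and applying the identity above to every $\varphi\in EB(\Omega,\Sigma)$ then gives $VBV^{-1}=\{V S(\varphi)V^{-1}:\varphi\in EB(\Omega,\Sigma)\}=\{M_{\varphi}:\varphi\in EB(\Omega,\Sigma)\}$.

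The only genuinely delicate point, and the reason the argument is routed through $S(\varphi)$ rather than carried out directly on coordinates, is that a priori ``multiplication by $\varphi$'' need not preserve the subspace $L$ (which is dense but not closed in $\sum_{i}L^{1}(\mu_{i})$); pulling the operator back from the honest operator $S(\varphi)\in B(X)$ via $V$ is exactly what forces $\varphi\mathbf{f}\in L$. The other thing to be careful about is that all functional-calculus manipulations are performed for merely $E$-essentially bounded $\varphi$, which is precisely what the observation $\mu_{i}\ll E$ is there to handle; neither of these should cause real trouble.
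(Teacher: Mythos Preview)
Your proof is correct and follows essentially the same route as the paper's: fix an element, apply Remark 8(ii)(a) to both $x$ and $S(\varphi)x$, use the multiplicativity $S(\chi_{\delta})S(\varphi)=S(\chi_{\delta}\varphi)$ to compare the two integrals against every $\delta\in\Sigma$, and conclude $g_{i}=\varphi f_{i}$ $\mu_{i}$-a.e. The paper's argument is terser and starts from $\mathbf{f}\in L$ rather than $x\in X$, but the computation is identical; your additional remarks on why $\varphi\mathbf{f}\in L$, why $\mu_{i}\ll E$, and why $B=\{S(\varphi):\varphi\in EB(\Omega,\Sigma)\}$ simply make explicit what the paper leaves implicit.
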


\begin{proof}
Let $\mathbf{f}\in L$ and $x=S(\varphi)V^{-1}(\mathbf{f}).$ Then, according to
point (a) in Remark 8(ii), if $\mathbf{g}=V(x)$, we have $x_{i}^{\ast}%
S(\chi_{\delta})x=\int_{\delta}g_{i}(w)\mu_{i}(dw)$ for every Borel set
$\delta\in\Sigma.$ On the other hand, $x_{i}^{\ast}S(\chi_{\delta}%
)x=x_{i}^{\ast}S(\chi_{\delta})S(\varphi)V^{-1}(\mathbf{f})=x_{i}^{\ast}%
S(\chi_{\delta}\varphi)V^{-1}(\mathbf{f})=\int_{\delta}\varphi(w)f_{i}%
(w)\mu_{i}(dw).$ Hence $g_{i}=\varphi f_{i}$ $\mu_{i}-$a.e., so $\mathbf{g}%
=\varphi\mathbf{f}$ a.e. and the proof is completed.
\end{proof}

\bigskip

In [4] it is presented an example of a Boolean algebra of projrctions,
$\mathcal{B}$, such that every nonzero projection $e\in\mathcal{B}$ has
multiplicity 2. However, for no choice of $x_{1},x_{2}\in X$ or $e\in
\mathcal{B}$, $e\neq0$ is $eX$ the algebraic sum of $\mathfrak{M}(ex_{1})$ and
$\mathfrak{M}(ex_{2}).$ In the rest of this paper we will consider only
Boolean algebras of finite uniform multiplicity with the direct sum property:

\begin{definition}
We say that the complete Boolean algebra $\mathcal{B}$ of uniform multiplicity
$k$ has the direct sum property if $X$ is the algebraic (and therefore,
Banach) direct sum of $\ \mathfrak{M}(x_{i}),1\leq i\leq k.$
\end{definition}

\bigskip

A particular case of a Boolean algebra of uniform multiplicity $k$ with the
direct sum property is the $k-$fold direct sum of $k$ copies of a cyclic
Boolean algebra of projections. Other examples are presented in [7].

\bigskip

\begin{lemma}
Suppose that $\mathcal{B}$ is a complete Boolean algebra of projections of
uniform multiplicity $k$ with the direct sum property. Then, for every
$\epsilon>0$ there exist $e\in\mathcal{B}$ , $e=E(\rho),$ $\rho\in\Sigma$ with
$\mu_{l}(\rho^{c})<\epsilon$ for every $1\leq l\leq k$ (where $\rho^{c}$ is
the complement of $\rho$) such that for every $\left\{  \varphi_{ij};1\leq
i,j\leq k\right\}  \subset EB(\Omega,\Sigma)$, the matrix $\left[
\varphi_{ij}\chi_{\rho}\right]  $ is a bounded linear operator on
$(L,\left\Vert \cdot\right\Vert _{0})$ and $\left[  \varphi_{ij}\chi_{\rho
}\right]  \ $\ belongs to the commutant $\mathcal{B}^{\prime}$ of
$\mathcal{B}$.
\end{lemma}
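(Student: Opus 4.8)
The plan is to establish the two assertions separately, noting that the membership in $\mathcal{B}'$ is almost formal once boundedness is known. For membership in $\mathcal{B}'$: the operator $M_\varphi\colon\mathbf f\mapsto\varphi\mathbf f$ of Lemma 10 applies the \emph{same} scalar function $\varphi$ in every coordinate, so by commutativity of pointwise multiplication one has $\varphi\bigl(\sum_j\varphi_{ij}\chi_\rho f_j\bigr)=\sum_j\varphi_{ij}\chi_\rho(\varphi f_j)$, i.e. the matrix $[\varphi_{ij}\chi_\rho]$ commutes with every $M_\varphi$, $\varphi\in EB(\Omega,\Sigma)$. Specializing $\varphi=\chi_\delta$ and combining Lemma 10 with the fact (Remark 6(iii)) that the range of $E$ is exactly $\mathcal{B}$, we conclude that $V^{-1}[\varphi_{ij}\chi_\rho]V$ commutes with every $e\in\mathcal{B}$. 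So the whole task is to produce $\rho$, with $\mu_l(\rho^c)<\epsilon$ for all $l$, making $[\varphi_{ij}\chi_\rho]$ bounded on $(L,\|\cdot\|_0)$.

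For boundedness I would use the direct sum property first. It yields $X=\bigoplus_{i=1}^k\mathfrak{M}(x_i)$ algebraically, hence (closed graph) topologically, so $L=\bigoplus_{i=1}^k L_i$ with $L_i=V(\mathfrak{M}(x_i))=\{\mathbf f\in L:f_l=0\text{ for }l\neq i\}$ and with bounded coordinate projections $M_i\colon L\to L_i$. Since Lemma 10 gives $M_{\varphi_{ij}}\in B(L)$, and since $[\varphi_{ij}\chi_\rho]=\sum_{i,j}M_{\varphi_{ij}}\circ T_{ij}^{\,\rho}\circ M_j$, where $T_{ij}^{\,\rho}$ takes an element $h$ sitting in coordinate $j$ to $\chi_\rho h$ sitting in coordinate $i$, it is enough to show that each $T_{ij}^{\,\rho}$ is a well-defined bounded map $L_j\to L$ for one $\rho$ chosen in advance and all pairs $i,j$. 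For $i=j$ this is $M_{\chi_\rho}$ compressed to $L_i$, bounded for every $\rho$; the real case is $i\neq j$.

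Via $V$ the map $T_{ij}^{\,\rho}$ is $E(\rho)$ composed with the densely defined operator $R_{ij}\colon\mathfrak{M}(x_j)\to\mathfrak{M}(x_i)$, $S(\psi)x_j\mapsto S(\psi)x_i$ ($\psi$ bounded), whose domain is dense in $\mathfrak{M}(x_j)$. Remark 8(i) and the mutual equivalence of the $\mu_l$ show that $R_{ij}$ is well defined, and that same equivalence shows it is closable: if $S(\psi_n)x_j\to0$ then $\psi_n\to0$ in $L^1(\mu_j)$, hence in $L^1(\mu_i)$, so $S(\psi_n)x_i\to0$. Its closure intertwines the cyclic complete Boolean algebras $\mathcal{B}|_{\mathfrak{M}(x_j)}$ and $\mathcal{B}|_{\mathfrak{M}(x_i)}$, and the crucial point is that such an operator becomes bounded after compression by a spectral projection $E(\rho)$ whose complement $\rho^c$ can be made to have arbitrarily small $\mu_l$-measure. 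This is exactly where Bade's multiplicity theory enters --- via Remark 8 applied to the cyclic restrictions $\mathcal{B}|_{\mathfrak{M}(x_l)}$, together with the use of the uniform-multiplicity hypothesis to rule out the cyclic summands carrying mutually incompatible norms --- reinforced by the finite-multiplicity commutant theorems of Foguel [8] and Tzafriri [14]. Granting this, one obtains a set $\rho_{ij}$ for each pair $i\neq j$; intersecting these finitely many sets, and intersecting further with the sets on which all Radon--Nikodym derivatives $d\mu_i/d\mu_j$ are bounded (needed both for the routine estimate of $\|T_{ij}^{\,\rho}\mathbf f\|_0$ and to keep control of the $L^1(\mu_i)$ coordinates), produces the desired $\rho$; since each of the finitely many discarded sets shrinks to a null set and the $\mu_l$ are equivalent, the truncation level can be taken large enough that $\mu_l(\rho^c)<\epsilon$ for all $1\le l\le k$ simultaneously.

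The main obstacle I anticipate is the compression step: proving that a closed densely defined operator intertwining cyclic complete Boolean algebras of projections on Banach spaces is bounded on spectral subspaces of arbitrarily small complement, and pinning down why the uniform-multiplicity hypothesis is precisely what makes this work. Everything else --- the commutation argument, the reduction to the coordinate-transfer maps via the direct sum decomposition, and the measure-theoretic bookkeeping that forces $\mu_l(\rho^c)<\epsilon$ for every $l$ --- is routine once that step is available.
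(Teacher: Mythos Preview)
Your framework is sound --- the reduction to coordinate-transfer maps $T_{ij}^{\,\rho}$ via the direct sum property, and the observation that commutation with $\mathcal{B}$ is formal --- but you have made the central step harder than it is. The paper's proof is nothing more than the Radon--Nikodym truncation you relegate to a parenthetical remark: set $\rho=\bigcap_{m,l}\{1/n\le h_{ml}\le n\}$ where $h_{ml}=d\mu_m/d\mu_l$, with $n$ large enough that $\mu_l(\rho^c)<\epsilon$ for every $l$. On $\rho$ all the spaces $\chi_\rho L^1(\mu_i)$ coincide as sets with mutually equivalent $L^1$ norms, and combined with the direct sum decomposition $E(\rho)X=\bigoplus_i E(\rho)\mathfrak{M}(x_i)$ this makes the matrix $[\varphi_{ij}\chi_\rho]$ manifestly bounded on $(L,\|\cdot\|_0)$. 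There is no appeal to closed densely defined intertwiners and no use of Foguel--Tzafriri at this stage.

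Your detour through closable $R_{ij}$ has two problems. First, the Foguel--Tzafriri result recorded here (Remark 12) concerns \emph{bounded} operators in $B'$; obtaining boundedness of a closed densely defined commuting operator on spectral subspaces of small complement is the content of Proposition 16, whose proof passes through Lemma 13, which in turn invokes the very lemma you are proving --- so that route is circular within the paper's architecture. Second, your closability argument contains a slip: mere equivalence of $\mu_j$ and $\mu_i$ does \emph{not} yield $\psi_n\to0$ in $L^1(\mu_i)$ from $\psi_n\to0$ in $L^1(\mu_j)$; for that you need $d\mu_i/d\mu_j$ bounded, which is once again the Radon--Nikodym truncation. (Closability can be salvaged by extracting an a.e.\ convergent subsequence and using injectivity of $V$, but this only underscores that the truncation is doing all the work.) In short, the step you flag as ``the main obstacle'' dissolves once you promote the Radon--Nikodym truncation from afterthought to the entire argument.
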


\begin{proof}
Since the measures $\mu_{l},$ $1\leq l\leq k$ are equivalent, let
$h_{ml}=\frac{d\mu_{m}}{d\mu_{l}},$ $1\leq m,l\leq k$ be the corresponding
Radon Nikodym derivative. Let $\epsilon>0$ be arbitrary. Fix $1\leq m,l\leq
k$. Then, since $\cup_{n=1}^{\infty}\left\{  1/n\leq h_{ml}\leq n\right\}
=\Omega$, there is a $n\in%
\mathbb{N}
$ such that $\mu_{l}(\left\{  1/n\leq h_{ml}\leq n\right\}  ^{c}%
)<\epsilon/k^{2}$. Therefore there is a $n\in%
\mathbb{N}
$ such that $\mu_{l}(\left\{  1/n\leq h_{ml}\leq n\right\}  ^{c})<\epsilon$
for every $1\leq m,l\leq k.$ Let $\rho=\left\{  1/n\leq h_{ml}\leq n\right\}
\in\Sigma$. It is easy to see that for every Borel subset $\sigma\subset\rho$
we have $\mu_{i}(\sigma)/n\leq\mu_{j}(\sigma)\leq n\mu_{i}(\sigma)$ for all
$1\leq i,j\leq k$. Hence all the spaces $M_{\chi_{\rho}}L^{1}(\mu_{i}%
)=\chi_{\rho}L^{1}(\mu_{i}),$ $1\leq i\leq k$, are equal as sets and mutually
isomorphic as Banach spaces. Then, clearly,%
\[
\chi_{\rho}L=\chi_{\rho}L^{1}(\mu_{1})\oplus\chi_{\rho}L^{1}(\mu_{2}%
)\oplus...\oplus\chi_{\rho}L^{1}(\mu_{k})
\]
Since $\mathcal{B}$ has the direct sum property, we also have%
\[
E(\rho)X=E(\rho)\mathfrak{M}(x_{1})\oplus\cdots\oplus E(\rho)\mathfrak{M}%
(x_{k})
\]
and the lemma follows.
\end{proof}

\bigskip

For the definition and basic facts about spectral operators on Banach spaces
we refer to [6, Part III]. In what follows we will use the following result of
Foguel [8] and Tzafriri [14]:

\begin{remark}
Let $T\in B(X)$ and let $\mathcal{B}$ be a complete Boolean algebra of
projections in $X,$ of uniform multiplicity $k,k\in%
\mathbb{N}
$. If $T$ commutes with the weakly closed algebra $B,$ generated by
$\mathcal{B},$ then there exists an increasing sequence of projections
$\left\{  e_{m}=E(\chi_{\delta_{m}})\text{ ; }m\in%
\mathbb{N}
\right\}  \subset\mathcal{B}$ such that $\left\{  e_{m}\right\}  $ converges
strongly to the identity $I\in B(X)$ and $Te_{m}$ is a spectral operator of
finite type for every $m$. Moreover, if $T\in B^{\prime}$ is a spectral
operator then $T$ is the sum of a spectral operator $R$ of scalar type in
$B^{\prime}$ and a nilpotent operator $Q$ of order $k,$ $Q\in B^{\prime}$.
\end{remark}

\begin{proof}
This follows from [14, Theorem 2 ] and [ 8, Lemma 2.1. and Theorem 2.3].
\end{proof}

\bigskip

Next we will study the dense linear subspaces of $X$ that are invariant under
every element of $B,$ where $B$ is the weakly closed algebra generated by
$\mathcal{B}$, the complete Boolean algebra of projections of uniform
multiplicity $k$ with the direct sum property. The following lemma is an
extension to the case of Banach spaces and an improvement on [5, Lemma 3.3].
Using Remark 8 and Lemma 9, we will identify\ $X$ with $L$ and $B$ with
$\left\{  VS(\varphi)V^{-1}\text{ ; }S(\varphi)\in B\right\}  .$

\bigskip

\begin{lemma}
Let $k\in%
\mathbb{N}
$ and $B$ the weakly closed algebra generated by the Boolean algebra of
projections of uniform multiplicity $k,$ $\mathcal{B}\subset B(X)$ and with
the direct sum property. With the above notations, suppose that
$\mathcal{D\subset}X$ is a dense linear subspace which is invariant under all
operators in $B.$ Then, for every $\epsilon>0,$ there exists an open and
closed set $\lambda_{\epsilon}\subset\Omega$ such that:\newline(i) $\mu
_{i}(\lambda_{\epsilon}^{c})<\epsilon,i=1,2,...,k,$ where $\lambda_{\epsilon
}^{c}$ is the complement of $\lambda_{\epsilon}$ in $\Omega$ and\newline(ii)
$\chi_{\lambda_{\epsilon}}\mathbf{e}_{j}\in\mathcal{D}$ for all $j\in\left\{
1,2,...,k\right\}  $ where $\left\{  \mathbf{e}_{j}\text{ ; }%
j=1,2,...,k\right\}  $ is the standard basis of $%
\mathbb{C}
^{(k)}.$
\end{lemma}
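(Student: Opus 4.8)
The plan is to combine the density of $\mathcal{D}$ with its invariance under the multiplication operators $M_{\varphi}$, $\varphi\in EB(\Omega,\Sigma)$, which by Lemma 9 are precisely the operators of $B$ once $X$ is identified with $L$ as in Remark 8. For each $j$ pick, using density, a vector $d_{j}\in\mathcal{D}$ with $\Vert d_{j}-x_{j}\Vert_{0}<\delta$, where $\delta>0$ is a small number to be fixed at the end, and write $V(d_{j})=\bigoplus_{i=1}^{k}f_{i}^{(j)}$ with $f_{i}^{(j)}\in L^{1}(\Omega,\Sigma,\mu_{i})$. Since $V(x_{j})=\bigoplus_{i}\delta_{ij}\chi_{\Omega}$, the definition of $\Vert\cdot\Vert_{0}$ in Remark 8(iii) gives $\Vert f_{i}^{(j)}-\delta_{ij}\chi_{\Omega}\Vert_{1,\mu_{i}}<\delta$ for all $i,j$. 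Because $\mathcal{D}$ is a linear subspace invariant under every $M_{\varphi}$, for any $\varphi_{1},\dots,\varphi_{k}\in EB(\Omega,\Sigma)$ the vector $\sum_{j=1}^{k}M_{\varphi_{j}}d_{j}$ lies in $\mathcal{D}$ and its $i$-th coordinate equals $\sum_{j}\varphi_{j}f_{i}^{(j)}$. Writing $D(\omega)=[f_{i}^{(j)}(\omega)]_{i,j}$ for the associated $k\times k$ matrix of functions and $\vec\varphi=(\varphi_{1},\dots,\varphi_{k})$, this coordinate vector is $D\vec\varphi$; so it suffices to find one clopen set $\lambda_{\epsilon}$ with $\mu_{i}(\lambda_{\epsilon}^{c})<\epsilon$ for all $i$ such that, for each $j_{0}$, there are $\varphi_{j}\in EB(\Omega,\Sigma)$ solving $D\vec\varphi=\chi_{\lambda_{\epsilon}}\mathbf{e}_{j_{0}}$.

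To produce $\lambda_{\epsilon}$, start from the set $\rho\in\Sigma$ furnished by Lemma 11 (more precisely, the set constructed in its proof) applied with $\epsilon/2$: one has $\mu_{l}(\rho^{c})<\epsilon/2$ for all $l$ and, restricted to $\rho$, the measures $\mu_{1},\dots,\mu_{k}$ are mutually comparable with a uniform constant $N$. Fix $\delta>0$ so small, depending only on $\epsilon,k,N$, that Chebyshev's inequality applied to $\Vert f_{i}^{(j)}-\delta_{ij}\chi_{\Omega}\Vert_{1,\mu_{i}}<\delta$ together with this comparability forces the Borel set
\[
\sigma=\rho\cap\bigcap_{i,j}\Bigl\{\,\omega:\ \bigl\vert f_{i}^{(j)}(\omega)-\delta_{ij}\bigr\vert<\tfrac{1}{2k}\,\Bigr\}
\]
to satisfy $\mu_{l}(\sigma^{c})<\epsilon$ for every $l$, and then choose the $d_{j}$ accordingly. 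For $\omega\in\sigma$ we have $\Vert D(\omega)-I\Vert<1/2$, so $D(\omega)$ is invertible with $\Vert D(\omega)^{-1}\Vert\le2$; by Cramer's rule its entries $g_{ij}$ are $\Sigma$-measurable on $\sigma$ and bounded there by $2$, hence, extended by $0$, lie in $EB(\Omega,\Sigma)$. Finally, since $\Omega$ is extremally disconnected and each $\mu_{i}$ vanishes on sets of first category, the Borel set $\sigma$ has the Baire property and so differs from a clopen set $\lambda_{\epsilon}$ by a meager — therefore $\mu_{i}$-null for every $i$ — set; in particular $\mu_{i}(\lambda_{\epsilon}^{c})=\mu_{i}(\sigma^{c})<\epsilon$, which is (i).

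To obtain (ii), fix $j_{0}$ and set $\varphi_{j}=g_{j j_{0}}\chi_{\lambda_{\epsilon}}\in EB(\Omega,\Sigma)$; by Lemma 9 each $M_{\varphi_{j}}$ belongs to $B$, so $u:=\sum_{j=1}^{k}M_{\varphi_{j}}d_{j}\in\mathcal{D}$. On $\lambda_{\epsilon}\cap\sigma$ the $i$-th coordinate of $u$ is $\sum_{j}f_{i}^{(j)}(\omega)g_{j j_{0}}(\omega)=\bigl(D(\omega)D(\omega)^{-1}\bigr)_{i j_{0}}=\delta_{i j_{0}}$; off $\lambda_{\epsilon}$ it vanishes because of the factor $\chi_{\lambda_{\epsilon}}$; and $\lambda_{\epsilon}\setminus\sigma$ is $\mu_{i}$-null, so $u=\chi_{\lambda_{\epsilon}}\mathbf{e}_{j_{0}}$ as an element of $L$. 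Hence $\chi_{\lambda_{\epsilon}}\mathbf{e}_{j_{0}}\in\mathcal{D}$ for every $j_{0}$, which is (ii), and the single clopen set $\lambda_{\epsilon}$ works for all of them at once.

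The two steps I expect to be delicate are, first, arranging that the exceptional set $\sigma^{c}$ is small \emph{simultaneously} for all the measures $\mu_{l}$: the hypothesis bounds only $\Vert f_{i}^{(j)}-\delta_{ij}\Vert_{1,\mu_{i}}$, which controls the $\mu_{i}$-size of $\{|f_{i}^{(j)}-\delta_{ij}|\ge 1/(2k)\}$ but not its $\mu_{l}$-size for $l\ne i$, and it is exactly the uniform comparability of the $\mu_{l}$ on $\rho$ (a consequence of the direct sum property, delivered by Lemma 11) that repairs this; and second, the replacement of the Borel set $\sigma$ by a genuine clopen $\lambda_{\epsilon}$ without loss of measure, which hinges on $\Omega$ being Stonean and on the measures annihilating meager sets. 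The remaining ingredients — the measurability and essential boundedness of the pointwise inverse $D^{-1}$, and the coordinatewise computation identifying $u$ with $\chi_{\lambda_{\epsilon}}\mathbf{e}_{j_{0}}$ — are routine.
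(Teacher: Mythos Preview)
Your argument is correct and follows essentially the same route as the paper's: approximate each $x_{j}$ by some $d_{j}\in\mathcal{D}$, pass via Lemma~11 to a set where the resulting matrix of coordinate functions is uniformly close to the identity (hence invertible with bounded inverse), replace that Borel set by a clopen one of the same measure using the Stonean structure and the vanishing of the $\mu_{i}$ on meager sets, and finally act with the entries of the inverse matrix---which lie in $B$ by Lemma~9---to recover $\chi_{\lambda_{\epsilon}}\mathbf{e}_{j}$ inside $\mathcal{D}$. Your presentation is in fact a little tighter than the paper's: by securing $\Vert D(\omega)^{-1}\Vert\le 2$ directly you avoid the paper's extra step of ``restricting $N$ to an open and closed subset of $\lambda_{\epsilon}$, if necessary'' in order to apply Lemma~11 to the inverse, and you correctly isolate the two genuinely delicate points (simultaneous smallness in all the $\mu_{l}$, and the clopen replacement).
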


\begin{proof}
If $\mathbf{z=(}z^{1},z^{2},...,z^{k})\in%
\mathbb{C}
^{(k)},$ consider the norm\newline%
\[
\left\Vert \mathbf{z}\right\Vert =\max\left\{  \left\vert z^{p}\right\vert
;1\leq p\leq k\right\}  .
\]
It is easy to see that there exists $\alpha>0$ such that if the set $\left\{
h_{1},h_{2},...,h_{k}\right\}  \subset%
\mathbb{C}
^{(k)}$ satisfies $\left\Vert h_{i}-e_{i}\right\Vert <\alpha,$ $i=1,2,...,k,$
then the set $\left\{  h_{1},h_{2},...,h_{k}\right\}  $ is linearly
independent. Let now $\epsilon>0$ be arbitrary. We can choose $\alpha
<\epsilon^{2}/2.$ Let $\rho\in\Sigma$, $\mu_{l}(\rho)<\epsilon/2,1\leq l\leq
k,$ be as in Lemma 11$.$ Since $\Omega$ is extremally disconnected, we can
assume that $\rho$ is an open and closed set. For every $j,$ $1\leq j\leq k$
let $\mathbf{g}_{j}(w)=\mathbf{e}_{j},$ if $w\in\rho$ and $\mathbf{g}%
_{j}(w)=0$ if $\omega\in\rho^{c}.$ Since by the point (a) of Remark 8(ii) we
have that $\mathbf{g}_{j}\in\chi_{\rho}L$ for every $j,$ $1\leq j\leq k$ and
$\chi_{\rho}\mathcal{D}$ is dense in $\chi_{\rho}L$, it follows that there
exists a set of elements $\left\{  \mathbf{l}_{i}\text{\ ; }1\leq i\leq
k\right\}  \subset\chi_{\rho}\mathcal{D},\mathbf{l}_{i}=l_{i}^{1}\oplus
l_{i}^{2}\oplus...\oplus l_{i}^{k}$ such that%
\[
\left\Vert \mathbf{l}_{i}-\mathbf{g}_{i}\right\Vert _{0}=\max_{1\leq p\leq
k}\left\{  \left\Vert l_{i}^{p}-g_{i}^{p}\right\Vert _{0}=\left\Vert l_{i}%
^{p}-g_{i}^{p}\right\Vert _{1}+\left\Vert T^{-1}(l_{i}^{p}-g_{i}%
^{p}\right\Vert \right\}  <\alpha<\epsilon^{2}.
\]
\newline Let $\delta_{\epsilon}=\cap_{i=1}^{k}\left\{  \omega\in\rho\text{ ;
}\left\vert l_{i}^{p}(w)-g_{i}^{p}(w)\right\vert \geq\epsilon\text{ and }1\leq
p\leq k\right\}  .$ Then, we have:%
\[
\epsilon^{2}/2>\alpha>\max\left\{  \left\Vert l_{i}^{p}-g_{i}^{p}\right\Vert
_{1};\text{ }1\leq i,p\leq k\right\}  \geq\epsilon\mu_{m}(\delta_{\epsilon
}),\text{ for }1\leq m\leq k.
\]
Hence $\mu_{m}(\delta_{\epsilon})<\epsilon/2$ for $m=1,2,...,k.$ Assuming that
$\epsilon<2,$ it follows that $\mu_{m}(\delta_{\epsilon}^{c})\neq0$ and since
$\Omega$ is a Stonean space, and $\mu_{m}$ a normal measure, $\mu_{m}%
(\delta_{\epsilon}^{c})=\mu_{m}((\delta_{\epsilon}^{c})^{\circ})$ where
$(\delta_{\epsilon}^{c})^{\circ}$ is the interior of $\delta_{\epsilon}^{c}.$
The same argument as the preceding one shows that there exists an open and
closed subset $\sigma_{\epsilon}\subset(\delta_{\epsilon}^{c})^{\circ}$ with
$\mu_{m}(\sigma_{\epsilon}^{c})<\epsilon/2$. Let $\lambda_{\epsilon}=\rho
\cap\sigma_{\epsilon}.$ Then, $\mu_{m}(\lambda_{\epsilon})<\epsilon$ for all
$1\leq m\leq k.$ It follows that all the components of the vectors
$l_{i}^{\epsilon}=l_{i}\chi_{\lambda_{\epsilon}}\in L$ are in $EB(\Omega
,\Sigma).$ Let $M$ be the matrix whose $i$-th column is $l_{i}^{\epsilon}.$
Then, using Lemma 11, it follows that $M$ is a bounded linear operator that
commutes with every element in $B$, so $M\in B^{^{\prime}}.$ The choice of
$\alpha$ implies that $M(w)$ is non singular for every $\omega\in
\lambda_{\epsilon}.$ Consider the matrix $N$ defined as follows%
\[
N(w)={\Huge \{}%
\begin{array}
[c]{c}%
M(w)^{-1}\text{ \ \ \ if }w\in\lambda_{\epsilon}\\
\text{ \ \ \ }0\text{ \ \ \ \ \ \ \ \ \ if }w\in\lambda_{\epsilon}^{c}%
\end{array}
\]
\newline By restricting $N$ to an open and closed subset of $\lambda
_{\epsilon},$ if necessary, we can apply again Lemma 11 and get $N\in
B^{\prime}$. It follows that the columns of the product $MN$ are linear
combinations of vectors in $\mathcal{D}$ with coefficients in $B.$ Since
$\mathcal{D}$ is invariant under $B$ we have that these columns belong to
$\mathcal{D}$. Since $M(w)N(w)=I$ for $w\in\lambda_{\epsilon}$ the proof is completed.
\end{proof}

\bigskip

We will use next the following results about spectral operators and their
resolutions of the identity from [6].

\begin{remark}
If the operator $M$ commutes with the spectral operator $T$, then $M$ commutes
with every resolution of the identity of $T.$
\end{remark}

\begin{proof}
This is [ [6], Corollary XV. 3.7.].
\end{proof}

\section{Algebras with complemented invariant subspace lattice}

\bigskip

\quad In this section we will prove our main result:

\begin{theorem}
Let $B$ be the strongly closed subalgebra of $B(X)$ generated by a complete
Boolean algebra of projections $\mathcal{B}\subset B(X)$ of finite uniform
multiplicity, $k,$ with the direct sum property$.$ If $A\subset B(X)$ is a
strongly closed algebra with complemented invariant subspace lattice that
contains $B,$ then $A$ is reflexive.
\end{theorem}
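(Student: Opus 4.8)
The plan is to follow the Arveson--Douglas--Pearcy strategy, adapted to the Banach-space setting, using the machinery built up in Sections 2.1 and 2.2. By Remark 1, it suffices to show that whenever $b\in B(X)$ satisfies $\mathrm{Lat}\,A^{(n)}\subset\mathrm{Lat}\,b^{(n)}$ for all $n$, then $b\in A$; equivalently, I must show $\mathrm{algLat}\,A=A$. Fix $b\in\mathrm{algLat}\,A$. The key object is the family of graph transformations for $A$: since $A$ has complemented invariant subspace lattice, Remark 6 (together with Lemma 3, which guarantees the relevant subspaces $K_0$ are complemented) produces, for each invariant subspace $K\in\mathrm{Lat}\,A^{(n)}$, graph transformations $T_1,\dots,T_{n-1}$ defined on a common domain. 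The first main step is to show that $b$ itself, viewed through these graphs, is "spectral-like" over the Boolean algebra $\mathcal B\subset B\subset A$: because $b$ commutes with every projection in $\mathcal B$ (those projections lie in $A$, hence $b$ leaves their ranges invariant, and since $\mathcal B$ is complemented each range is reducing), $b\in\mathcal B'$, so by Remark 12 there is an increasing sequence $e_m=E(\delta_m)\uparrow I$ in $\mathcal B$ with each $be_m$ a spectral operator of finite type.

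The second main step is a localization argument. Working with a fixed $e_m$ and compressing by $qAq$ where $q=e_m$ (Proposition 2(i) tells us $qAq$ still has complemented invariant subspace lattice and $\mathrm{algLat}(qAq)=q(\mathrm{algLat}\,A)q$), I reduce to the case where $b$ is a spectral operator of finite type commuting with $B$. By Remark 12 again, such a $b$ decomposes as $R+Q$ with $R$ of scalar type in $B'$ and $Q\in B'$ nilpotent of order $\le k$. Using Remark 8 and Lemma 9 I identify $X$ with $L\subset\sum_{i=1}^k L^1(\mu_i)$ and $B$ with the multiplication operators $M_\varphi$, $\varphi\in EB(\Omega,\Sigma)$; then $R$ being scalar type in $B'$ forces $R=M_\psi$ for a scalar $\psi$, so $R\in B\subset A$. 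Thus it remains to show $Q\in A$, i.e. to handle the nilpotent part. For this I consider the dense linear subspace $\mathcal D$ of $X$ on which the graph transformation machinery produces, via $Q$, vectors of the form $x\oplus Qx\oplus Q^2x\oplus\cdots$; one checks $\mathcal D$ is invariant under $B$, and applies Lemma 13 to find, for each $\epsilon>0$, an open-closed $\lambda_\epsilon$ with $\mu_i(\lambda_\epsilon^c)<\epsilon$ and $\chi_{\lambda_\epsilon}\mathbf e_j\in\mathcal D$ for all $j$.

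The third step pins down $Q$ on $\chi_{\lambda_\epsilon}X$. Since $\chi_{\lambda_\epsilon}\mathbf e_j\in\mathcal D$, the vector $\chi_{\lambda_\epsilon}\mathbf e_j\oplus Q(\chi_{\lambda_\epsilon}\mathbf e_j)\oplus\cdots$ lies in the appropriate $\mathrm{Lat}\,A^{(l+2)}$, hence (because $b\in\mathrm{algLat}\,A$ and thus $b^{(l+2)}$ leaves it invariant, and using Lemma 3 to split off complements) $Q$ restricted to $\chi_{\lambda_\epsilon}X$ agrees with a fixed operator forced by the graph structure — concretely, $Q\chi_{\lambda_\epsilon}$ is a matrix with entries in $B$, so $Q\chi_{\lambda_\epsilon}\in B\subset A$. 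Letting $\epsilon\to0$, the operators $Q\chi_{\lambda_\epsilon}$ converge strongly to $Q$ (using $E(\lambda_\epsilon)\to I$ strongly and the uniform boundedness of $\mathcal B$ from Remark 7(iv)), and since $A$ is strongly closed, $Q\in A$. Combining, $be_m=R+Q\in A$ for every $m$; since $e_m\to I$ strongly and $A$ is strongly closed, $b\in A$.

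The main obstacle I anticipate is the passage from "$b$ commutes with $\mathcal B$ and is locally spectral of finite type" to the concrete matrix-of-multiplications form of its nilpotent part $Q$ over the set $\lambda_\epsilon$ — that is, showing the graph transformation determined by $Q$ really does land in $B'$ after the truncation, so that Lemma 13 and Lemma 11 apply. This requires carefully tracking that the domain $\mathcal D$ of the graph transformations is genuinely $B$-invariant and dense, and that the truncated operators $Q\chi_{\lambda_\epsilon}$ are bounded with $EB(\Omega,\Sigma)$ entries; the interplay between the graph-transformation domain and the measure-theoretic truncation is the delicate point, and is exactly where the "direct sum property" and the finite multiplicity $k$ (bounding the nilpotency order) are essential.
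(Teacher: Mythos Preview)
Your proposal has a genuine gap at the heart of the argument: you aim to show $b\in A$ by decomposing $b$ itself as $R+Q$ (scalar part plus nilpotent), but neither piece can be placed in $A$ the way you suggest. The claim ``$R$ being scalar type in $B'$ forces $R=M_\psi$ for a scalar $\psi$, so $R\in B\subset A$'' is false: a scalar-type spectral operator commuting with $B$ need not lie in $B$. For instance, when $k=2$ and $X=L^1(\mu)\oplus L^1(\mu)$, the flip $\bigl(\begin{smallmatrix}0&I\\ I&0\end{smallmatrix}\bigr)$ is scalar-type spectral and commutes with every $M_\varphi\oplus M_\varphi$, yet it is not a multiplication operator. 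Likewise, your step~5 concludes only that $Q\chi_{\lambda_\epsilon}$ is a matrix of multiplication operators, which places it in $B'$, not in $B$ and not a priori in $A$; you have given no mechanism to pass from $B'$ to $A$.

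The paper's argument runs in the opposite direction and avoids this trap entirely. One does \emph{not} decompose $b$; instead one fixes $K\in\mathrm{Lat}\,A^{(n)}$ and uses Remark~21 (via Lemma~20 and Proposition~17) to write, after localizing by $p_m\in\mathcal B$, $p_m^{(n)}K=(0\oplus p_m^{(n-1)}K_0)+\Gamma$, where $\Gamma$ is the graph of operators $T_ip_m$ which are bounded spectral of finite type in $(p_mAp_m)'$. The key step you are missing is Proposition~19: any spectral operator of finite type in $A'$ commutes with every element of $\mathrm{algLat}\,A$. Applied to $T_ip_m$ (not to $b$!), this gives $T_ip_m\,(p_mbp_m)=(p_mbp_m)\,T_ip_m$, so $(p_mbp_m)^{(n)}$ preserves $\Gamma$; the induction hypothesis handles $0\oplus p_m^{(n-1)}K_0$. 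Letting $p_m\nearrow I$ yields $K\in\mathrm{Lat}\,b^{(n)}$, and Remark~1 finishes. In short, the spectral/nilpotent decomposition is applied to the graph transformations in $A'$, with Proposition~19 supplying the bridge to $\mathrm{algLat}\,A$; it is not applied to $b$.
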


The proof of this theorem will be given after a series of auxiliary results.
In the rest of this section $\mathcal{B}\mathbb{\ }$\ will denote a complete
Boolean of projections in $X$ of finite uniform multiplicity $k,$ with the
direct sum property and $B$ the weakly closed operator algebra generated by
$\mathcal{B}$. We will identify $X$ with $(L,\left\Vert \cdot\right\Vert
_{0})$ as in Remark 8.

\begin{proposition}
Let $B$ as in Theorem 15 and let $T$ be a densely defined closed operator on
$X$ which commutes with $B$. There exists an increasing sequence of
projections $\left\{  q_{p}\right\}  _{p=1}^{\infty}\subset\mathcal{B}$ that
converges strongly to $I$ such that $Tq_{p}$ is a spectral operator of finite
type for every $p\in%
\mathbb{N}
$.
\end{proposition}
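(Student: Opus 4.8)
The plan is to reduce the statement about a densely defined closed operator $T$ to the known bounded case recorded in Remark~13. The obstacle is that $T$ need not be bounded, so one cannot directly invoke the Foguel--Tzafriri result; the remedy is to first exhibit a sequence of projections in $\mathcal{B}$ that compress $T$ to bounded operators, and then apply Remark~13 to each compression. First I would use the identification of $X$ with $(L,\left\Vert \cdot\right\Vert _{0})$ from Remark~8 and the description of $B$ from Lemma~9 as the algebra of multiplication operators $M_{\varphi}$, $\varphi\in EB(\Omega,\Sigma)$. Since $T$ commutes with every $M_{\varphi}$, in particular it commutes with every projection $E(\delta)$, $\delta\in\Sigma$. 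Using the domain $\mathcal{D}(T)$, which is dense and (being the domain of an operator commuting with $B$) invariant under $B$, I would apply Lemma~12 or the Foguel--Tzafriri truncation of Remark~13 to produce an increasing sequence $\delta_{p}\in\Sigma$ with $E(\delta_{p})\to I$ strongly and such that each $TE(\delta_{p})$ is bounded: concretely, the boundedness comes from the fact that on $\chi_{\delta_{p}}L$ the graph of $T$ can be controlled once $\delta_p$ is chosen so that the relevant $L^1$-norms are comparable, exactly as in the proof of Lemma~11.

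Once $Tq_{p}$ is bounded for $q_{p}=E(\delta_{p})$, I would observe that $Tq_{p}\in B(X)$ still commutes with $B$: for $b\in B$ and $x\in X$, $bTq_{p}x=Tbq_{p}x=Tq_{p}bx$ using that $b$ commutes with $q_p$ (both lie in the commutative algebra generated by $\mathcal{B}$) and that $T$ commutes with $B$ on $\mathcal{D}(T)\supset q_pX$. Then the second part of Remark~13 applies to the bounded operator $Tq_{p}$: since it lies in $B'$, it is the sum $R_{p}+Q_{p}$ of a scalar-type spectral operator $R_{p}\in B'$ and a nilpotent $Q_{p}\in B'$ of order at most $k$. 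In particular $Tq_{p}$ is a spectral operator whose resolution of the identity has range contained in $B'$, and its spectrum is that of $R_{p}$; being the sum of a scalar operator and a nilpotent of bounded order, $Tq_{p}$ is a spectral operator \emph{of finite type}. This is precisely the assertion.

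The replacement of $q_{p}$ by a further subsequence, if needed, is harmless: one intersects the $\delta_{p}$ produced above with the sets furnished by the first half of Remark~13 (applied to the algebra $B$ itself, noting $T$ commutes with $B$), so that simultaneously $q_{p}\uparrow I$ strongly and each $Tq_{p}$ has finite type. Finally I would remark that increasingness and strong convergence to $I$ are preserved under such intersections because $\mathcal{B}$ is a complete Boolean algebra and the measures $\mu_{i}$ are countably additive, so $E(\delta_p)\to I$ strongly whenever $\mu_i(\delta_p^c)\to 0$ for all $i$; the vectors $E(\delta)x_i$ are total in $X$, and boundedness of $\mathcal{B}$ (Remark~7(iv)) gives the uniform bound needed to pass from convergence on a dense set to strong convergence everywhere. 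The main point requiring care is the boundedness of the compression $TE(\delta_p)$, i.e. choosing $\delta_p$ so that the unbounded multiplier behavior of $T$ is tamed; this is handled exactly by the truncation argument already used for Remark~13 and Lemma~11, so no new ideas are needed beyond assembling these pieces.
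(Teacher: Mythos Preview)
Your two-step outline --- first find projections in $\mathcal{B}$ that make $T$ bounded, then invoke Foguel--Tzafriri to upgrade to ``spectral of finite type'' --- is exactly the paper's strategy. However, your justification of the crucial first step is not the right mechanism, and your use of the Foguel--Tzafriri result in the second step is misstated.

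For the boundedness step you write that ``the boundedness comes from the fact that on $\chi_{\delta_p}L$ the graph of $T$ can be controlled once $\delta_p$ is chosen so that the relevant $L^1$-norms are comparable, exactly as in the proof of Lemma~11.'' But Lemma~11 is about restricting to a set where the Radon--Nikodym derivatives $d\mu_m/d\mu_l$ are bounded, so that a \emph{matrix with bounded entries} defines a bounded operator on $L$. That has nothing to do with the unboundedness of an arbitrary closed $T\in B'$: the obstruction is not incomparability of the measures $\mu_i$. The paper's actual device is Lemma~13: since $\mathcal{D}(T)$ is dense and $B$-invariant, Lemma~13 produces open-closed $\sigma_p$ with $\mu_l(\sigma_p^c)<1/2p$ and $\chi_{\sigma_p}\mathbf{e}_j\in\mathcal{D}(T)$ for every $j$, i.e.\ $r_p x_j\in\mathcal{D}(T)$ for $r_p=E(\sigma_p)$. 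Because $T$ commutes with $B$ and is closed, and $r_pX$ is generated (via Remark~8(ii)(b) and the direct sum property) by $\{S(\varphi)r_p x_j:\varphi\in EB(\Omega,\Sigma)\}$, this is what forces $Tr_p$ to be everywhere defined and bounded on $r_pX$. You should make this explicit rather than pointing to Lemma~11.

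For the second step, the ``second part'' of the Foguel--Tzafriri remark says that \emph{if} $T\in B'$ is already spectral then $T=R+Q$ with $Q$ nilpotent of order $\leq k$; it does not assert that every bounded $T\in B'$ is spectral. So your sentence ``since it lies in $B'$, it is the sum $R_p+Q_p$'' is a non sequitur. What the paper does (and what you almost say in your third paragraph) is apply the \emph{first} part of that remark to the bounded operator $Tr_p\in B'$: this yields a further set $\delta_p$ with $\mu_l(\delta_p^c)<1/2p$ such that $Tr_pE(\delta_p)$ is spectral of finite type, and then one takes $q_p=E(\sigma_p\cap\delta_p)$. With these two corrections your sketch becomes the paper's proof.
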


\begin{proof}
Let $\mathcal{D}\subset X$ be the (dense) domain of $T.$ Since $T$ commutes
with $B$ it follows that $\mathcal{D}$ is invariant under $B.$ By Lemma 13 it
follows that for every $p\in%
\mathbb{N}
$ there is an open and closed subset $\sigma_{p}\subset\Omega$ such that
$\chi_{\sigma_{p}}\oplus\chi_{\sigma_{p}}\oplus...\oplus\chi_{\sigma_{p}}%
\in\mathcal{D}$ and $\mu_{l}(\sigma_{p}^{c})<1/2p$ for every $1\leq l\leq k$.
Denote $r_{p}=S(\chi_{\sigma_{p}})\in\mathcal{B}$. Obviously, we can take
$r_{p}\leq r_{p+1}$ (in the sense that $r_{p}X\subset r_{p+1}X)$ for every
$p\in%
\mathbb{N}
$. Therefore $Tr_{p}$ $(p\in%
\mathbb{N}
)$ is a bounded operator and $r_{p}\nearrow I$. On the other hand, by Remark
12, since $Tr_{p}\in B^{^{\prime}}$, for every $p\in%
\mathbb{N}
,$ there exists a Borel set $\delta_{p}\in\Sigma$ such that, for all $1\leq
l\leq p,$ we have $\mu_{l}(\delta_{p}^{c})<1/2p.$ Furthermore, if
$q_{p}=S(\chi_{\delta p\cap\sigma_{p}})$, then $Tq_{p}$ is a spectral operator
of finite type. Clearly $\left\{  q_{p}\right\}  $ is an increasing sequence
of projections in $\mathcal{B}$ that converges strongly to $I$ and the proof
is completed.
\end{proof}

\begin{proposition}
Assume that $B$ is as in the statement of Theorem 15. Let $T$ be a densely
defined graph transformation for $B\subset B(X)$. Then there exists an
increasing sequence of projections $\left\{  q_{p}\right\}  _{p=1}^{\infty
}\subset\mathcal{B}$ that converges strongly to $I$ such that $Tq_{p}$ is a
spectral operator of finite type for every $p\in%
\mathbb{N}
$. In\ particular every such transformation is closable\ and its closure
commutes with\ $B$.
\end{proposition}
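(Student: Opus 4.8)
The plan is to reduce everything to Proposition 16 by first establishing closability. The point of departure is the following consequence of Definition 6: if $\{x\oplus Tx\oplus T_1x\oplus\cdots\oplus T_lx:x\in P\}\in\mathrm{Lat}\,B^{(l+2)}$, then applying $b\oplus b\oplus\cdots\oplus b$ with $b\in B$ shows that $bx\in P$ and $b(Tx)=T(bx)$ for every $x\in P$ (and likewise $b(T_ix)=T_i(bx)$). Hence the domain $P$ of $T$ is a dense linear subspace invariant under $B$, and $T$ commutes with $B$ on $P$; the companion operators $T_1,\dots,T_l$ impose no further restriction and may be dropped. So it suffices to prove that a densely defined operator $T$ commuting with $B$ on a dense $B$-invariant domain $P$ is closable, that $\overline T$ commutes with $B$, and that there is an increasing sequence $\{q_p\}\subset\mathcal B$ with $q_p\to I$ strongly and $\overline Tq_p$ a spectral operator of finite type (this last operator agreeing with $x\mapsto q_pTx$ on $P$, which is the asserted $Tq_p$).

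The first and main step is to produce, for each $\epsilon>0$, a genuinely bounded operator in $B'$ with which $T$ coincides on a large spectral corner. Applying Lemma 13 to $\mathcal D=P$ yields an open and closed set $\rho\subset\Omega$ with $\mu_i(\rho^c)<\epsilon$, $1\le i\le k$, such that $\chi_\rho\mathbf e_j\in P$ for $1\le j\le k$. Put $\mathbf t_j:=T(\chi_\rho\mathbf e_j)\in X=L$; using point (b) of Remark 8(ii) and the extremal disconnectedness of $\Omega$, shrink $\rho$ to a smaller open and closed set (still with $\mu_i$-small complement, and still denoted $\rho$) on which every component of every $\mathbf t_j$ lies in $EB(\Omega,\Sigma)$. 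Since $T$ commutes with the operators $M_\varphi$ and $E(\rho)$ of $B$, we get $T(\varphi\chi_\rho\mathbf e_j)=\varphi\chi_\rho\mathbf t_j$ for all $\varphi\in EB(\Omega,\Sigma)$. Thus $T$ agrees, on the $B$-submodule $\mathcal M_\rho=\{\sum_j\varphi_j\chi_\rho\mathbf e_j:\varphi_j\in EB(\Omega,\Sigma)\}$ of $P$, with the matrix $N_\rho$ whose $j$-th column is $\chi_\rho\mathbf t_j$; by Lemma 11 the operator $N_\rho$ is bounded on $(L,\|\cdot\|_0)$ and lies in $B'$. A short truncation argument upgrades this to an equality $T=N_\rho$ on all of $P\cap E(\rho)X$: any $z$ there is the $X$-limit of the truncations $E(\delta_m)z\in\mathcal M_\rho$, where $\delta_m\subset\rho$ is the set on which all components of $z$ are bounded by $m$, and both $T$ and $N_\rho$ commute with $E(\delta_m)\in B$, so letting $m\to\infty$ gives $Tz=N_\rho z$.

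Closability now follows at once: if $x_n\in P$ with $x_n\to0$ and $Tx_n\to y$, then for $q=E(\rho)\in B$ we have $qx_n\in P\cap E(\rho)X$, hence $T(qx_n)=q(Tx_n)\to qy$ on one hand and $T(qx_n)=N_\rho(qx_n)\to0$ on the other (as $N_\rho$ is bounded); therefore $qy=0$. Taking a sequence of such $q$'s increasing strongly to $I$ (possible since the $\mu_i(\rho^c)$ may be made arbitrarily small, exactly as in the proof of Proposition 16) forces $y=0$, so $T$ is closable. That $\overline T$ commutes with $B$ is the routine limit argument: for $b\in B$ and $x\in\mathrm{dom}\,\overline T$, pick $x_n\in P$ with $x_n\to x$ and $Tx_n\to\overline Tx$; then $bx_n\in P$, $bx_n\to bx$ and $T(bx_n)=bTx_n\to b\overline Tx$, so $bx\in\mathrm{dom}\,\overline T$ and $\overline T(bx)=b\overline Tx$. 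Finally $\overline T$ is a densely defined closed operator commuting with $B$, so Proposition 16 applies and produces the required increasing sequence $\{q_p\}\subset\mathcal B$ converging strongly to $I$ with $\overline Tq_p$ a spectral operator of finite type; since $q_p\in B$ one has $\overline Tq_px=q_pTx$ for $x\in P$, which is the assertion about $Tq_p$.

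I expect the genuine difficulty to be concentrated in the second paragraph: passing from the meagre a priori information that $T$ commutes with $B$ and is defined on the special bounded vectors $\chi_\rho\mathbf e_j$ furnished by Lemma 13, to the conclusion that $T$ literally coincides with a bounded element of $B'$ on a co-small spectral projection. This is exactly where Bade's multiplicity apparatus (Lemma 13 and Lemma 11) together with the truncation device of Remark 8(ii) carry the load; once that coincidence is in hand, closability, the commutation of $\overline T$ with $B$, and the appeal to Proposition 16 are all immediate.
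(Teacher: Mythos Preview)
Your argument is correct, but it takes a genuinely different route from the paper's. The paper does \emph{not} drop the companion operators $T_{1},\dots,T_{l-2}$; instead it exploits them to obtain closedness for free. Concretely, the paper splits $X^{(l-1)}=\Delta_{l-1}\oplus\Delta_{l-1}^{c}$ into the diagonal and the ``sum-zero'' complement, both $B^{(l-1)}$-invariant, and defines $\widetilde T$ on $X^{(l-1)}$ by $\widetilde T(x\oplus\cdots\oplus x)=Tx\oplus T_{1}x\oplus\cdots\oplus T_{l-2}x$ on the diagonal part and $\widetilde T=0$ on $\Delta_{l-1}^{c}$. The graph of $\widetilde T$ is then, up to a coordinate shuffle, $Z\oplus(\Delta_{l-1}^{c}\times\{0\})$; since $Z\in\mathrm{Lat}\,B^{(l)}$ is closed and the splitting is topological, $\widetilde T$ is \emph{closed} outright. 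One then applies Proposition~16 directly to $\widetilde T$ on $X^{(l-1)}$, where $\mathcal B^{(l-1)}$ has uniform multiplicity $k(l-1)$, and reads off the conclusion for $T$ on the diagonal.

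Your approach trades this trick for extra work: after discarding the $T_i$ you no longer know that $T$ is closable, so you reconstruct bounded approximants $N_\rho\in B'$ via Lemma~13 and Lemma~11 (essentially re-running the first half of the proof of Proposition~16) to force $E(\rho)y=0$ in the closability test. This is valid, and has the modest advantage of staying on $X$ rather than passing to $X^{(l-1)}$, but it duplicates machinery that Proposition~16 already encapsulates. The paper's proof is about three lines precisely because it recognises that the closed subspace $Z$ already \emph{is} a closed graph once one changes the ambient space; your second paragraph, while correct, is doing work that the companion operators would have done for you.
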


\begin{proof}
Let $T$ be a densely defined graph transformation for $B$ with domain
$\mathcal{D}_{T}$. Since $T$ is a graph transformation for $B$, there exists
$l\in%
\mathbb{N}
$ and operators $T_{1},T_{2},...,T_{l-2}$ such that the subspace
\[
Z=\left\{  x\oplus Tx\oplus T_{1}x\oplus T_{2}x\oplus...\oplus T_{l-2}x\text{
};\text{ }x\in\mathcal{D}_{T}\right\}
\]
belongs to \textit{Lat}$B^{(l)}$. Denote by $\Delta_{l-1}=\left\{  x\oplus
x\oplus...\oplus\text{ ; }x\in X\right\}  \subset X^{(l-1)}$. Then it can be
easily seen that the subspace
\[
\Delta_{l-1}^{c}=\left\{  x_{1}\oplus x_{2}\oplus...\oplus x_{l-1}\text{ ;
}x_{i}\in X\text{ with }\sum_{i=1}^{l-1}x_{i}=0\right\}
\]
is a Banach subspace complement of $\Delta_{l-1}$ which is invariant under
every element of $B^{(l-1)}$. The operator $\widetilde{T}$ defined as follows%
\[
\widetilde{T}(x\oplus x\oplus...\oplus x)=Tx\oplus T_{1}x\oplus...\oplus
T_{l-2}x\text{ \ \ \ \ \ \ \ if }x\in\mathcal{D}_{T}\text{ }%
\]
and\newline%
\[
\widetilde{T}(x_{1}\oplus x_{2}\oplus...\oplus x_{l-1})=0\text{ \ \ \ if
}x_{1}\oplus x_{2}\oplus...\oplus x_{l-1}\in\Delta_{l-1}^{c}%
\]
is a closed, densely defined operator which commutes with $B^{(l-1)}$. An
application of Proposition 16 with $k\ $replaced by $k(l-1)$ completes the proof.
\end{proof}

\begin{remark}
Let $A\subset B(X)$ be a strongly closed algebra with complemented invariant
subspace lattice and $I\in A$. Then, if $Q\in A^{\prime}$ is such that
$Q^{2}=0$ it follows that $Q\in($algLat$A)^{\prime}$.
\end{remark}

\begin{proof}
The proof of [7, Lemma 3] for the particular case of Hilbert spaces can be
extended to the case of Banach spaces. Indeed, let $Q\in A^{\prime}$ be such
that $Q^{2}=0.$ Then, if $Y=\ker Q$ is the null space of $Q,$ $Y$ is in
\textit{Lat}$A$ and since $A$ has a complemented invariant subspace lattice,
$Y$ has a complement, $Y^{c}$ in \textit{Lat}$A.$ Therefore $Q$ can be written
as a matrix%

\[
Q=\left[
\begin{array}
[c]{cc}%
0\text{ } & c\\
0 & 0\text{ }%
\end{array}
\right]  .
\]
\newline and every $a\in A$ can be written as the matrix\newline%
\[
a=\left[
\begin{array}
[c]{cc}%
a_{1} & 0\\
0 & a_{2}%
\end{array}
\right]  .
\]
\newline Moreover, every $b\in$ \textit{algLat}$A$, can be written as a
matrix\newline%
\[
b=\left[
\begin{array}
[c]{cc}%
b_{1} & 0\\
0 & b_{2}%
\end{array}
\right]
\]

Since $aQ=Qa$ it follows that $ca_{2}=a_{1}c.$ Hence the subspace $\left\{
cx\oplus x\text{ ; }x\in Y^{c}\right\}  $ belongs to \textit{Lat}$A$ and is
therefore invariant for \textit{algLat}$A.$ It follows that $cb_{2}=b_{1}c,$
so $Qb=bQ$.
\end{proof}

\bigskip

The part (i) of the next result is a generalization of Remark 18.

\begin{proposition}
Let $A\subset B(X)$ be an algebra with complemented invariant subspace
lattice. Then \newline(i) If $Q\in A^{\prime}$ is a nilpotent operator, then
$Q\in($algLat$A)^{\prime}.$\newline(ii) If $T=R+Q$ is a spectral operator of
finite type (where $R$ is spectral of scalar type and $Q$ is nilpotent) and
$T\in A^{\prime}$, then $R\in($algLat$A)^{\prime}$ and $N\in($%
algLat$A)^{\prime}.$
\end{proposition}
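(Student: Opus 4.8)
The plan is to bootstrap from Remark 18 (the $Q^2=0$ case) by induction on the nilpotency order, and then to handle the scalar part by a commutant argument. For part (i), suppose $Q\in A'$ with $Q^m=0$, and argue by induction on $m$; the base case $m=2$ is exactly Remark 18 (and $m=1$ is trivial since then $Q=0$). For the inductive step, I would consider the closed invariant subspace $Y=\ker Q^{m-1}\in\text{Lat}A$, which by the complementation hypothesis has an invariant complement $Y^c\in\text{Lat}A$. Writing $Q$ as a block matrix with respect to $X=Y\oplus Y^c$, and noting that $Q$ maps $Y^c$ into $Y$ (since $Q^{m-1}$ annihilates $QX\cap$ nothing obvious — here one must be careful) — more robustly, one works with $pAp$ on $pX=Y$ where $p\in A'$ is the projection onto $Y$: by Proposition 2(i), $pAp$ has complemented invariant subspace lattice, and $Q|_Y=pQp\in(pAp)'$ is nilpotent of order $\le m-1$, so by induction $pQp\in(\text{algLat}(pAp))'=(p(\text{algLat}A)p)'$. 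One then needs the analogous statement for $(1-p)X=Y^c$ and for the off-diagonal corner, and then reassemble; the off-diagonal block is where a graph-subspace argument in the style of Remark 18 re-enters. A cleaner alternative: pass to $X^{(2)}$ and use that $\{x\oplus Q^{m-1}x : x\in X\}$-type graph subspaces lie in $\text{Lat}A^{(2)}$, reducing everything to the order-two case over a doubled space — this is likely the intended route, combining Lemma 3 (complementation of graph subspaces) with Remark 18 applied to $A^{(2)}$.

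For part (ii), given $T=R+Q\in A'$ with $R$ scalar-type spectral and $Q$ nilpotent, both in $A'$ by the uniqueness of the canonical decomposition of a spectral operator (so $R,Q\in A'$ automatically, since $A'$ is a closed algebra containing $T$ and the decomposition is canonical). Part (i) immediately gives $Q\in(\text{algLat}A)'$. For $R$: since $R$ is scalar-type, it has a resolution of the identity $E_R$ whose range is a Boolean algebra of projections, and $R=\int \lambda\,E_R(d\lambda)$ is in the closed algebra generated by $E_R$. Each $a\in A$ commutes with $T$, hence (since $Q=T-R$ and we want $a$ to commute with $R$) — actually the quickest path: each $a\in A$ commutes with $T$; I claim each $a\in A$ commutes with $R$. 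This follows because $R$ is a limit (in a suitable sense) or a function of $T$ in the spectral calculus; more precisely, $R$ lies in the bicommutant of $T$ — the scalar part of a spectral operator of finite type is a polynomial-like spectral function of $T$, hence $R\in\{T\}''\subset A''\cap A' $... here one should instead invoke Remark 14: if $b\in\text{algLat}A$ then $b$ commutes with $R$ iff $b$ commutes with $E_R$; and each projection $E_R(\delta)$, being in $\{T\}''$, lies in $A'$, so its kernel and range are in $\text{Lat}A$, hence invariant under $\text{algLat}A$, hence $b$ commutes with each $E_R(\delta)$ and therefore with $R$. (The typo ``$N$'' in the statement should read $Q$.)

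The main obstacle I anticipate is the inductive step in part (i): getting from nilpotency order $m-1$ to order $m$ requires controlling the off-diagonal corner of $Q$ relative to an invariant splitting, and it is not immediate that the naive block decomposition makes $Q$ strictly upper-triangular with a nilpotent diagonal of lower order. The doubled-space trick $A\rightsquigarrow A^{(2)}$ sidesteps this: one observes that for $Q\in A'$ with $Q^m=0$, the subspace $\{x\oplus Q^{m-1}x:x\in X\}\in\text{Lat}A^{(2)}$ is complemented (Lemma 3, with the relevant projection being $I$), and that this subspace being invariant for $\text{alg}\,\text{Lat}\,A^{(2)}$ forces $Q^{m-1}$ to intertwine the two copies of any $b\in\text{algLat}A$, i.e. $Q^{m-1}\in(\text{algLat}A)'$; combined with $Q\cdot Q^{m-1}=0$ reinterpreted and an induction on the remaining factor, one closes the argument. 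For part (ii) the only subtlety is the automatic membership $R,Q\in A'$ and the correct invocation of Remark 14 for the resolution of the identity; these are routine once stated carefully.
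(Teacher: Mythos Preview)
Your instinct in the first approach to part (i) is essentially right, and you abandon it too quickly. The paper's argument is exactly a cleaned-up version of that idea, but with a better choice of splitting: instead of $\ker Q^{m-1}$, take $p_{0}\in A'$ to be the projection onto $\ker Q$ (which exists because $\ker Q\in\text{Lat}A$ is complemented). Since $Qp_{0}=0$ one has $(1-p_{0})Q=(1-p_{0})Q(1-p_{0})$, and iterating gives $((1-p_{0})Q)^{n}=(1-p_{0})Q^{n}=0$ once $Q^{n+1}=0$, so the induction hypothesis applies to $(1-p_{0})Q\in A'$ directly on $X$ (no passage to $pAp$ is needed). The remaining piece $p_{0}Q\in A'$ satisfies $(p_{0}Q)^{2}=p_{0}(Qp_{0})Q=0$, so Remark~18 handles it. Summing the two pieces finishes (i). Your version with $\ker Q^{m-1}$ would also work (then $(1-p)Q=0$, $pQp$ has order $\le m-1$, and $pQ(1-p)$ has square zero), but you did not notice that the block decomposition really is strictly upper-triangular: $Q(Y^{c})\subset Y$ because $Q^{m-1}(Qz)=Q^{m}z=0$.

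Your ``doubled-space'' alternative, however, is a dead end as written. The graph $\{x\oplus Q^{m-1}x\}$ only yields $Q^{m-1}\in(\text{algLat}A)'$, which is already immediate from Remark~18 since $(Q^{m-1})^{2}=0$; it gives no leverage on $Q$ itself. The vague ``induction on the remaining factor'' does not close the gap, and applying Remark~18 to $A^{(2)}$ would require $A^{(2)}$ to have complemented invariant subspace lattice, which is not assumed and is in fact the hard content of Lemma~20. So drop this route.

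For part (ii) your outline is correct and matches the paper: by Remark~14 the resolution of the identity $\mathbf{E}(\cdot)$ of $T$ lies in $A'$; each $\mathbf{E}(\delta)$ is then a projection whose range and kernel are in $\text{Lat}A$, so every $b\in\text{algLat}A$ commutes with $\mathbf{E}(\delta)$, hence with $R=\int\lambda\,\mathbf{E}(d\lambda)$. Then $Q=T-R\in A'$ is nilpotent and part (i) applies. (Yes, the ``$N$'' in the statement is a typo for $Q$.)
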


\begin{proof}
We will prove the point (i) of this proposition by induction. By Remark
18(ii), if $Q\in A^{\prime}$ and $Q^{2}=0$, then $Q\in($\textit{algLat}%
$A)^{\prime}$. Suppose that for every operator $Q\in A^{\prime}$ with
$Q^{n}=0$ it follows that $Q\in($\textit{algLat}$A)^{\prime}$ and let $Q\in
A^{\prime}$ with $Q^{n+1}=0$. Let $p_{0}$ denotes a projection on $\ker Q$
such that $p_{0}\in A^{\prime}$. Since $Qp_{0}=0$ it follows that \newline%
\[
(1-p_{0})Q=(1-p_{0})Q(1-p_{0})
\]
\newline and therefore\newline%
\[
(1-p_{0})Q^{k}=((1-p_{0})Q(1-p_{0}))^{k},k\in%
\mathbb{N}
.
\]
\newline Since $Q^{n+1}=0$ we have $Q^{n}(X)\subset\ker Q$ and
therefore\newline%
\[
0=(1-p_{0})Q^{n}=((1-p_{0})Q(1-p_{0}))^{n}.
\]
\newline By hypothesis, $(1-p_{0})Q=(1-p_{0})Q(1-p_{0})\in($\textit{algLat}%
$A)^{\prime}.$ On the other hand, since $Q\in A^{\prime}$ and $p_{0}\in
A^{\prime}$ we have $p_{0}Q\in A^{\prime}.$ Since obviously $(p_{0}Q)^{2}=0$,
by Remark 18 (ii), it follows that $p_{0}Q\in($\textit{algLat}$A)^{\prime}$.
Therefore\newline%
\[
Q=p_{0}Q+(1-p_{0})Q\in(\text{\textit{algLat}}A)^{\prime}%
\]
and the proof of (i) is completed.\newline We turn now to prove the point
(ii). By Remark 14, every resolution of the identity of $T$, $\mathbf{E}%
(\delta)$, where $\delta$ is a Borel subset of the spectrum of $T,$
$\delta\subset sp(T),$ is in $A^{\prime}.$ Therefore, since $A$ has
complemented invariant subspace lattice, it follows that $\mathbf{E}%
(\delta)\in($\textit{algLat}$A)^{\prime}$ for every Borel set $\delta\subset
sp(T).$ Hence $R=\int\lambda\mathbf{E}(d\lambda)\in($\textit{algLat}%
$A)^{\prime}.$ Since $T\in A^{\prime}$ and $R\in A^{\prime}$ it follows that
$Q\in A^{\prime}.$ By part (i) it follows that $Q\in($\textit{algLat}%
$A)^{\prime}$.
\end{proof}

\begin{lemma}
Let $A$ be a strongly closed algebra with complemented invariant subspace
lattice that contains a complete Boolean algebra of projections of finite
uniform multiplicity $k$ with the direct sum property. Then, if $K\in
$Lat$A^{(n)}$ for some $n\in%
\mathbb{N}
$, then, there exists an increasing sequence of projections $\left\{
p_{m}\right\}  \subset\mathcal{B},$ $p_{m}\nearrow I$ such that $p_{m}^{(n)}K$
is complemented in Lat$(p_{m}Ap_{m})^{(n)}$ for every $m\in%
\mathbb{N}
$.
\end{lemma}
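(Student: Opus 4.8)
The plan is to induct on $n$, using Remark 6 to present the relevant subspaces of \textit{Lat}$A^{(n)}$ as graphs of graph transformations, Proposition 17 to make those graph transformations bounded after compressing to a suitable projection in $\mathcal{B}$, and then complementing graphs of bounded operators by the explicit device of Lemma 3. For $n=1$ there is nothing to prove: $I\in\mathcal{B}\subseteq A$, and $pAp$ has complemented invariant subspace lattice for every $p\in\mathcal{B}$ by Proposition 2(i), so $pK\in$\textit{Lat}$(pAp)$ is automatically complemented; take $p_{m}=I$. For the inductive step, given $K\in$\textit{Lat}$A^{(n)}$, I would first reduce to the case that the projection $\pi_{1}(K)$ of $K$ onto the first coordinate is dense in $X$: letting $X_{1}=\overline{\pi_{1}(K)}\in$\textit{Lat}$A$ with complement $X_{1}^{c}$, the subspace $K+(X_{1}^{c}\oplus 0^{(n-1)})\in$\textit{Lat}$A^{(n)}$ is closed, has dense first coordinate and the same zero-set as $K$, and for $p\in\mathcal{B}$ the decomposition $p^{(n)}\bigl(K+(X_{1}^{c}\oplus 0^{(n-1)})\bigr)=p^{(n)}K\oplus(pX_{1}^{c}\oplus 0^{(n-1)})$ is a topological direct sum, so a complement of the left side in \textit{Lat}$(pAp)^{(n)}$ produces one for $p^{(n)}K$. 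Then, with $K_{0}=\{\mathbf{x}\in X^{(n-1)}\text{ ; }0\oplus\mathbf{x}\in K\}\in$\textit{Lat}$A^{(n-1)}$, the inductive hypothesis gives $\{p'_{j}\}\subseteq\mathcal{B}$, $p'_{j}\nearrow I$, with $(p'_{j})^{(n-1)}K_{0}$ complemented in \textit{Lat}$(p'_{j}Ap'_{j})^{(n-1)}$.

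Fix $j$ and pass to $Y=p'_{j}X$, $A_{Y}=p'_{j}Ap'_{j}$ (complemented invariant subspace lattice by Proposition 2(i)), $\mathcal{B}_{Y}=p'_{j}\mathcal{B}$ (a complete Boolean algebra of projections on $Y$ of finite uniform multiplicity $\le k$ with the direct sum property), and $B_{Y}=p'_{j}Bp'_{j}\subseteq A_{Y}$, the weakly closed algebra generated by $\mathcal{B}_{Y}$. Then $\widetilde{K}=(p'_{j})^{(n)}K\in$\textit{Lat}$A_{Y}^{(n)}$ has zero-set $(p'_{j})^{(n-1)}K_{0}$, complemented in \textit{Lat}$A_{Y}^{(n-1)}$ with some complement $\widetilde{K}_{0}^{c}$, and $\pi_{1}(\widetilde{K})$ is dense in $Y$. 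By Remark 6,
\[
(Y\oplus\widetilde{K}_{0}^{c})\cap\widetilde{K}=\{x\oplus T_{1}x\oplus\cdots\oplus T_{n-1}x\text{ ; }x\in\widehat{P}\},
\]
where $\widehat{P}=\pi_{1}(\widetilde{K})$ is dense in $Y$ and $T_{1},\dots,T_{n-1}$ are densely defined graph transformations for $A_{Y}$, hence for $B_{Y}$. Applying Proposition 17 to $B_{Y}$ (and taking a meet over $i$ to use one sequence for all of them), I get an increasing $\{q_{p}\}\subseteq\mathcal{B}_{Y}$ with $q_{p}\nearrow I_{Y}$ such that $q_{p}Y$ is contained in the domain of $\overline{T_{i}}$ and $\overline{T_{i}}q_{p}$ is a spectral operator of finite type, in particular bounded.

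Now compress by such a $q_{p}$; since $q_{p}\le p'_{j}$ we have $q_{p}A_{Y}q_{p}=q_{p}Aq_{p}$, which has complemented invariant subspace lattice. Then $(q_{p})^{(n)}K\in$\textit{Lat}$(q_{p}Aq_{p})^{(n)}$ has zero-set $(q_{p})^{(n-1)}K_{0}$ with complement $(q_{p})^{(n-1)}\widetilde{K}_{0}^{c}$, and $\bigl(q_{p}X\oplus(q_{p})^{(n-1)}\widetilde{K}_{0}^{c}\bigr)\cap(q_{p})^{(n)}K$ is the graph over $q_{p}\widehat{P}$ of the bounded operator $(\overline{T_{1}}q_{p},\dots,\overline{T_{n-1}}q_{p})$; since this graph is closed and the operator is bounded, $q_{p}\widehat{P}$ is a closed subspace, hence lies in \textit{Lat}$(q_{p}Aq_{p})$ and has a complement there. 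Writing $(q_{p})^{(n)}K$ as the topological direct sum of its zero-set part and this graph of a bounded operator, one complements each piece exactly as in the proof of Lemma 3 — the graph of a bounded operator $S\colon V\to W$ with $V,W$ invariant is complemented in $V\oplus W$ by $\{0\}\oplus W$ — and assembles an invariant complement of $(q_{p})^{(n)}K$ in \textit{Lat}$(q_{p}Aq_{p})^{(n)}$. Thus $q_{p}$ lies in $\mathcal{G}$, the set of $e\in\mathcal{B}$ for which $e^{(n)}K$ is complemented in \textit{Lat}$(eAe)^{(n)}$. This $\mathcal{G}$ is hereditary downward (if $e'\le e\in\mathcal{G}$, intersect the complement with $(e'X)^{(n)}$), and we have produced, for every $j$, elements $q_{p}^{(j)}\in\mathcal{G}$ with $q_{p}^{(j)}\nearrow p'_{j}$ while $p'_{j}\nearrow I$; so, reading this through the measures $\mu_{i}$ of Remark 8, for every $\varepsilon>0$ there is $E(\rho)\in\mathcal{G}$ with $\mu_{i}(\Omega\setminus\rho)<\varepsilon$ for all $i$. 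Choosing $\rho_{m}$ with $E(\rho_{m})\in\mathcal{G}$ and $\mu_{i}(\Omega\setminus\rho_{m})<2^{-m}$ and setting $p_{m}=E(\bigcap_{l\ge m}\rho_{l})$ gives an increasing sequence in $\mathcal{G}$ with $p_{m}\nearrow I$, which finishes the induction.

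The main obstacle is that $A^{(n)}$ need not have complemented invariant subspace lattice even though $A$ does, so the complement of $(q_{p})^{(n)}K$ cannot come from any abstract complementation principle: the finite-multiplicity hypothesis must be used concretely — through Proposition 17 — to turn the graph transformations $T_{i}$ into bounded operators, because only then does the splitting ``zero-set part $\oplus$ graph of a bounded map'' admit an invariant complement built by the elementary construction of Lemma 3. The secondary point needing care is bookkeeping across the induction and the two successive compressions: one must keep every ambient direct sum topological, so that the recurring move ``complement inside one summand and then adjoin the rest'' remains within the closed (and invariant) subspaces.
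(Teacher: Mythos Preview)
Your proof is correct and follows essentially the same strategy as the paper's: induction on $n$, use Remark 6 to exhibit the ``graph part'' of $K$, invoke Proposition 17 to make the graph transformations bounded after compressing by a projection in $\mathcal{B}$, and then complement the resulting graph of a bounded map via the explicit device of Lemma 3. There is one cosmetic difference worth noting: to ensure the graph transformations are \emph{densely} defined (as Proposition 17 requires), the paper does not reduce to the case $\pi_{1}(K)$ dense; instead it takes the domain $P$ as is, chooses a complement $\overline{P}^{c}$ of $\overline{P}$ in \textit{Lat}$(r_{m}Ar_{m})$, and extends each $T_{i}$ by $0$ on $\overline{P}^{c}$ to obtain a densely defined $\widetilde{T}_{i}$. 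Your preliminary reduction $K\rightsquigarrow K+(X_{1}^{c}\oplus 0^{(n-1)})$ achieves the same effect from the other side and is equally valid (indeed your direct-sum bookkeeping there is clean). Your final diagonal extraction via the hereditary set $\mathcal{G}$ and the controlling measures is also a tidy way to organize what the paper writes more tersely as ``set $p_{m}=r_{m}q_{m}$''.
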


\begin{proof}
We will prove the lemma by induction on $n.$ For $n=1$ the statement is
obvious with $p_{m}=I$ for every $m.$ Let $K\in$\textit{Lat}$A^{(n)}.$ Denote
$K_{0}=\left\{  \mathbf{x}\in X^{(n-1)}\text{ ; }0\oplus\mathbf{x}\in
K\right\}  .$ Then, obviously, $K_{0}\in$\textit{Lat}$A^{(n-1)}$ and therefore
there exists an increasing sequence of projections $\left\{  r_{m}\right\}
\subset\mathcal{B}$, $r_{m}\nearrow I$ such that $r_{m}^{(n-1)}K_{0}$ is
complemented in \textit{Lat}$(r_{m}Ar_{m})^{(n-1)}$. Let $(r_{m}^{(n-1)}%
K_{0})^{c}$ be the complement of $r_{m}^{(n-1)}K_{0}$ in \textit{Lat}%
$(r_{m}Ar_{m})^{(n-1)}$. Then, $(r_{m}X\oplus(r_{m}^{(n-1)}K_{0})^{c})\cap
K\in$\textit{Lat}$A^{(n)}$ and $r_{m}^{(n)}K=(0\oplus r_{m}^{(n-1)}%
K_{0})+(r_{m}X\oplus(r_{m}^{(n-1)}K_{0})^{c})\cap r_{m}^{(n)}K.$ Since
$(r_{m}X\oplus(r_{m}^{(n-1)}K_{0})^{c})\cap r_{m}^{(n)}K$ is the complement of
$0\oplus r_{m}^{(n-1)}K_{0}$ in $r_{m}^{(n)}K,$ there exist graph
transformations $T_{1},T_{2},...T_{n-1}$ such that
\[
(r_{m}X\oplus(r_{m}^{(n-1)}K_{0})^{c})\cap r_{m}^{(n)}K=\left\{  x\oplus
T_{1}x\oplus T_{2}x\oplus...\oplus T_{n-1}x\text{ };\text{ }x\in P\right\}  ,
\]
where $P$ is a linear subspace of $r_{m}X$ invariant under every element of
$r_{m}Ar_{m}$. The closure of $P$ in $r_{m}X$, $\overline{P},$ belongs to
\textit{Lat}$(r_{m}Ar_{m})$ and hence has a complement $\overline{P}^{c}$ in
\textit{Lat}$(r_{m}Ar_{m})$. For $1\leq i\leq n-1,$ consider the following
densely defined, graph transformation on $r_{m}X$:\newline%
\begin{align*}
\widetilde{T}_{i}x  &  =T_{i}x\text{ \ if }x\in P\text{ and}\\
\widetilde{T}_{i}x  &  =0\text{ \ \ \ \ if }x\in\overline{P}^{c}.
\end{align*}
\newline Then $\widetilde{T}_{i}$ commutes with $A.$ By Proposition 17, there
exists an increasing sequence of projections $\left\{  q_{p}\right\}
\subset\mathcal{B}$, $q_{p}\nearrow I$ such that $\widetilde{T}_{i}q_{p}$ are
bounded spectral operators of finite type. From Lemma 3 it follows that the
subspace\newline%
\[
\Gamma_{\left\{  \widetilde{T}_{1}q_{p},\widetilde{T}_{2}q_{p},\cdots
,\widetilde{T}_{n-1}q_{p}\right\}  }=\left\{  q_{p}x\oplus\widetilde{T}%
_{1}q_{p}x\oplus\widetilde{T}_{2}q_{p}x\oplus...\oplus\widetilde{T}_{n-1}%
q_{p}x\text{ ; }x\in q_{p}P\oplus q_{p}\overline{P}^{c}\right\}
\]
is complemented in \textit{Lat}$(q_{p}Aq_{p})^{(n)}$. By the definition of the
transformations $\widetilde{T}_{i}$ it follows immediately that the subspace
\[
\left\{  q_{p}x\oplus T_{1}q_{p}x\oplus T_{2}q_{p}x\oplus...\oplus
T_{n-1}q_{p}x\text{ ; }x\in P\right\}
\]
is complemented in \textit{Lat}$(q_{p}Aq_{p})^{(n)}.$ If we set $p_{m}%
=r_{m}q_{m}\in\mathcal{B}$ we have that $p_{m}\nearrow I$, $p_{m}^{(n)}K$ is
complemented in \textit{Lat}$(p_{m}Ap_{m})^{(n)}$:%
\begin{align*}
p_{m}^{(n)}X  &  =(0\oplus p_{m}^{(n-1)}K_{0})+\left\{  p_{m}x\oplus
T_{1}p_{m}x\oplus...\oplus T_{n-1}p_{m}x\text{ };\text{ }x\in P\right\} \\
&  +((p_{m}P)^{c})\oplus p_{m}^{(n-1)}K_{0}^{c}).
\end{align*}
Hence:%
\[
p_{m}^{(n)}X=(p_{m}^{(n)}K)+((p_{m}P)^{c})\oplus p_{m}^{(n-1)}K_{0}^{c}).
\]
\newline and the proof of the lemma is completed.
\end{proof}

\bigskip

The following statement follows from the proof of Lemma 20.

\begin{remark}
If $A$ is as in the statement of Lemma 20 and $K\in$Lat$A^{(n)}$ for some
$n\in%
\mathbb{N}
$, then there exists an increasing sequence of projections $\left\{
p_{m}\right\}  \subset\mathcal{B},$ $p_{m}\nearrow I$ such that $p_{m}%
^{(n)}K=(0\oplus p_{m}^{(n-1)}K_{0})+\left\{  p_{m}x\oplus T_{1}p_{m}%
x\oplus...\oplus T_{n-1}p_{m}x\text{ ; }x\in P\right\}  $ where $K_{0}%
=\left\{  x\in X^{(n-1)}\text{ ; }0\oplus x\in K\right\}  $ and $T_{i}%
p_{m},1\leq i\leq n-1$, $m\in%
\mathbb{N}
,$ are bounded spectral operators of finite type on the closed $A-$invariant
subspace $p_{m}P$ that commute with $p_{m}Ap_{m}.$
\end{remark}

We can now give the proof of Theorem 15:

\bigskip

\textbf{Proof of Theorem 15}. Let $b\in$ \textit{algLat}$A$ and $K\in
$\textit{Lat}$A^{(n)}.$ We will prove by induction on $n$ that there exists an
increasing sequence of projections $\left\{  p_{m}\right\}  \subset
\mathcal{B}$ such that $p_{m}\nearrow I$ and $p_{m}^{(n)}K\in$\textit{Lat}%
$(p_{m}bp_{m})^{(n)}$ for every $m\in%
\mathbb{N}
$ and therefore $K\in$\textit{Lat}$b^{(n)}$; then apply Remark 1 to conclude
that $b\in A.$ By Remark 21, there exists an increasing sequence of
projections $\left\{  p_{m}\right\}  \subset\mathcal{B},$ $p_{m}\nearrow I$
such that $p_{m}^{(n)}K=(0\oplus p_{m}^{(n-1)}K_{0})+\left\{  p_{m}x\oplus
T_{1}p_{m}x\oplus T_{2}p_{m}x\oplus...\oplus T_{n-1}p_{m}x\text{ ; }x\in
P\right\}  $ where $K_{0}=\left\{  x\in X^{(n-1)}\text{ ; }0\oplus x\in
K\right\}  $ and $T_{i}p_{m},1\leq i\leq n-1$, $m\in%
\mathbb{N}
,$ are bounded spectral operators of finite type on the closed $A-$invariant
subspace $p_{m}P$ that commute with $p_{m}Ap_{m}.$ The induction hypothesis
and Proposition 2 (i) imply that $0\oplus p_{m}^{(n-1)}K_{0}\in$%
\textit{Lat}$(p_{m}bp_{m})^{(n)}.$ By Proposition 19 (ii) it follows that the
bounded spectral operators of finite type $T_{i}p_{m},1\leq i\leq n-1$, $m\in%
\mathbb{N}
$ commute with $p_{m}bp_{m}.$ Hence $p_{m}^{(n)}K\in$\textit{Lat}$(p_{m}%
bp_{m})^{(n)}$. Since $p_{m}\nearrow I$ and, by Remark 7 (iv), $\mathcal{B}$
is norm bounded, it follows that $K\in$\textit{Lat}$b^{(n)}$ and the result
follows. $\blacksquare$

\bigskip

\begin{corollary}
Let $A\subset B(X)$ be a strongly closed algebra that contains a complete
Boolean algebra of projections $\mathcal{B}$ of finite uniform multiplicity
with the direct sum property. If $A$ has complemented invariant subspace
lattice, then $A=A"$ where $A"$ is the bicommutant of $A.$
\end{corollary}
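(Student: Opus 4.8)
The plan is to derive Corollary 22 as a direct consequence of Theorem 15 together with Remark 4. First I would invoke Theorem 15: since $A$ is a strongly closed algebra with complemented invariant subspace lattice that contains the weakly (equivalently strongly) closed algebra $B$ generated by the complete Boolean algebra $\mathcal{B}$ of finite uniform multiplicity with the direct sum property, Theorem 15 gives that $A$ is reflexive, i.e.\ \textit{algLat}$A = A$. (One should note in passing that $I \in A$, since $\mathcal{B}\subset A$ contains the identity projection $E(\Omega)$; this is needed to apply the earlier remarks.)

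Next I would apply Remark 4, which states that a strongly closed subalgebra $I\in A\subset B(X)$ with complemented invariant subspace lattice that is reflexive satisfies $A'' = A$. The proof of Remark 4 is already recorded in the excerpt: any $a\in A''$ commutes in particular with every projection onto a closed invariant subspace of $A$ (such projections lie in $A'$ because $A$ has complemented invariant subspace lattice, so the projection along the invariant complement commutes with $A$); hence $a$ leaves every element of \textit{Lat}$A$ invariant, so $a\in$ \textit{algLat}$A = A$. Combining the two steps yields $A'' \subseteq \textit{algLat}A = A$; the reverse inclusion $A\subseteq A''$ is automatic, so $A = A''$.

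There is essentially no obstacle here: the corollary is a formal packaging of Theorem 15 (the substantive result) with the elementary Remark 4. The only point requiring a word of care is the implicit hypothesis that $A$ is unital and that the projections onto invariant complements belong to $A'$ — but both are built into the standing assumptions of the paper ("all the subalgebras $A\subset B(X)$ will be assumed to be strongly closed and containing the identity operator") and into the definition of complemented invariant subspace lattice, so the argument goes through verbatim. I would therefore present the proof in two or three sentences: apply Theorem 15 to get reflexivity, then apply Remark 4 to conclude $A = A''$.
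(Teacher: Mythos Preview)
Your proposal is correct and follows exactly the paper's own proof, which reads in its entirety ``Follows from Theorem 15 and Remark 4.'' Your additional remarks about why $I\in A$ and why the projections onto invariant subspaces lie in $A'$ are accurate clarifications of the standing hypotheses, but the argument itself is identical to the paper's.
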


\begin{proof}
Follows from Theorem 15 and Remark 4.
\end{proof}

\bigskip

\bigskip%

\[
\text{{\LARGE References}}%
\]

1. W.B. Arveson, A density theorem for operator algebras, \textit{Duke Math.
J. 34 (1967), }635-647\textit{.}

2. W. G. Bade, On Boolean algebras of projections and algebras of operators,
\textit{Trans. Amer. Math. Soc. 80 (1955), }345-360\textit{.}

3. W. G. Bade, A multiplicity theory for Boolean algebras of projections in
Banach spaces, \textit{Trans. Amer. Math. Soc. 92 (1959), 508-530.}

4. J. Dieudonn\'{e}, Champs de vecteurs non localement triviaux, \textit{Arch.
Math. 75 (1956), 6-10.}

5. R. G. Douglas and C. Pearcy, Hyperinvariant subspaces and transitive
algebras, \textit{Mich. Math. J. 19 (1972), }1-12\textit{.}

6. N. Dunford and J. Schwartz, Linear operators Part III, \textit{Wiley-
Interscience,} \textit{New York- London- Sydney- Toronto, 1971.}

7. A. Feintuch and P. Rosenthal, Remarks on reductive operator algebras,
\textit{Israel J. Math. 15 (1973), }130-136\textit{.}

8. S. R. Foguel, Boolean algebras of projections of finite
multiplicity\textit{, Pacific J. Math. 9 (1959), 681-693.}

9. T. B. Hoover, Operator algebras with complemented invariant subspace
lattices, \textit{Indiana Univ. Math. J. 22 (1972/73), }1029-1035\textit{.}

10. H. Onder and M. Orhon, Transitive operator algebras on the $n-$ fold
direct sum of a cyclic Banach space, \textit{J. Operator Theory 22 (1989),
}99-107\textit{.}

11. H. Radjavi and P. Rosenthal,\ Invariant subspaces, \textit{2nd edition,}
\textit{Dover Publications, Mineola, New York, 2003.}

12. P. Rosenthal and A. R. Sourour, On operator algebras ccontaining cyclic
Boolean algebras II, \textit{J. London Math. Soc. 16 (1977), }%
501-506\textit{.}

13. M. H. Stone, Boundedness properties in function lattices, \textit{Canadian
J. Math. 1 (1949), }176-186\textit{.}

14. L. Tzafriri, Operators commuting with Boolean algebras of projections of
finite multiplicity, \textit{Pacific J. Math. }20 (1967), 571-587.

\end{document}